\newcommand{\mt}[1]{\mathtt{#1}}
\newcommand{\A}{\mathcal{A}}
\newcommand{\re}{\mathbb{R}}
\newcommand{\cpx}{\mathbb{C}}
\newcommand{\N}{\mathbb{N}}
\newcommand{\lmd}{\lambda}
\newcommand{\eps}{\epsilon}
\newcommand{\dt}{\delta}
\def\af{\alpha}
\def\bt{\beta}
\def\rank{\mbox{rank}}
\newcommand{\sig}{\sigma}
\newcommand{\Sig}{\Sigma}
\newcommand{\reff}[1]{(\ref{#1})}
\newcommand{\mc}[1]{\mathcal{#1}}
\newcommand{\bnum}{\begin{enumerate}}
\newcommand{\enum}{\end{enumerate}}
\newcommand{\bit}{\begin{itemize}}
\newcommand{\eit}{\end{itemize}}
\newcommand{\be}{\begin{equation}}
\newcommand{\ee}{\end{equation}}
\newcommand{\baray}{\begin{array}}
\newcommand{\earay}{\end{array}}
\newcommand{\bca}{\begin{cases}}
\newcommand{\eca}{\end{cases}}
\newcommand{\bcen}{\begin{center}}
\newcommand{\ecen}{\end{center}}
\newcommand{\bbm}{\begin{bmatrix}}
\newcommand{\ebm}{\end{bmatrix}}
\newcommand{\bpm}{\begin{pmatrix}}
\newcommand{\epm}{\end{pmatrix}}
\newcommand{\btab}{\begin{tabular}}
\newcommand{\etab}{\end{tabular}}
\newtheorem{theorem}{Theorem}[section]
\newtheorem{prop}[theorem]{Proposition}
\newtheorem{lemma}[theorem]{Lemma}
\newtheorem{defi}[theorem]{Definition}
\newtheorem{example}[theorem]{Example}
\newtheorem{algorithm}[theorem]{Algorithm}
\newtheorem{remark}[theorem]{Remark}
\def\beq#1{\begin{equation}\label{#1}}
\def\eeq{\end{equation}}
\def\bep{\begin{proof}}
\def\eps{\varepsilon}
\def\ep{\end{proof}}
\def\bt{\begin{theorem}}
\def\et{\end{theorem}}
\def\bl{\begin{lemma}}
\def\el{\end{lemma}}
\def\reff#1{(\ref{#1})}
\def\A{{\mathcal A}}
\def\B{\mathcal B}
\def\ignore#1{}
\def\qed{\hfill {$ \Box $} \medskip}
\def\bea#1{\begin{array}{#1}}
\def\ea{\end{array}}
\begin{document}

\title{\bf Real Eigenvalues of nonsymmetric tensors}

\author{Jiawang Nie}
\address{
Department of Mathematics,  University of California San Diego,  9500
Gilman Drive,  La Jolla,  California 92093,  USA.
} \email{njw@math.ucsd.edu}

\author{Xinzhen Zhang}
\address{
Department of Mathematics, School of Science, Tianjin University, Tianjin 300072, China.
} \email{xzzhang@tju.edu.cn}

\begin{abstract}
This paper discusses the computation of real $\mt{Z}$-eigenvalues
and $\mt{H}$-eigenvalues of nonsymmetric tensors.
A general nonsymmetric tensor has finitely many Z-eigenvalues,
while there may be infinitely many ones for special tensors.
In the contrast, every nonsymmetric tensor has finitely many $\mt{H}$-eigenvalues.
We propose Lasserre type semidefinite relaxation methods for computing such eigenvalues.
For every nonsymmetric tensor that has finitely many real $\mt{Z}$-eigenvalues,
we can compute all of them; each of them can be computed
by solving a finite sequence of semidefinite relaxations.
For every nonsymmetric tensor,
we can compute all its real $\mt{H}$-eigenvalues; each of them can be computed
by solving a finite sequence of semidefinite relaxations.
Various examples are demonstrated.
\end{abstract}

\keywords{tensor, $\mt{Z}$-eigenvalue, $\mt{H}$-eigenvalue,
Lasserre's hierarchy, semidefinite program}

\subjclass[2010]{15A18, 15A69, 90C22}

\maketitle

\section{Introduction}\label{Sec1}

For positive integers $m$ and $n_1,n_2,\cdots, n_m$, an $m$-order and
$(n_1,n_2,\cdots, n_m)$-dimensional real tensor is an array in the space
$\mathbb{R}^{n_1\times n_2\times\cdots\times n_m}$.
Every tensor $\A$ from this space can be indexed as
\be
\mathcal{A}=(\mathcal{A}_{i_1i_2\cdots i_m}), ~~1\leq i_j\leq n_j,~~j=1,2,\cdots m.
\ee
When $n_1=\cdots=n_m=n$, $\A$ is called an $m$-order $n$-dimensional tensor.
In such case, the tensor space $\mathbb{R}^{n_1\times n_2\times\cdots\times n_m}$
is denoted as $\mathtt{T}^m(\mathbb{R}^n)$.
A tensor in $\mathtt{T}^m(\mathbb{R}^n)$ is said to be {\it symmetric}
if its entries are invariant under permutations of indices $(i_1,i_2,\ldots, i_m)$.
The subspace of symmetric tensors in $\mathtt{T}^m(\mathbb{R}^n)$
is denoted as $\mathtt{S}^m(\mathbb{R}^n)$.
By replacing the real field $\re$ by the complex field $\cpx$,
the tensor spaces  $\mathtt{T}^m(\cpx^n)$ and $\mathtt{S}^m(\cpx^n)$
are similarly defined. Using the notation as in Qi~\cite{Q07},
for $\mathcal{A}\in\mathtt{T}^m(\cpx^n)$ and $x:=(x_1,\ldots, x_n)$, we denote
\be
\left\{\begin{array}{rl}
\mathcal{A}x^m & :=\sum\limits_{1\leq i_1,\cdots,i_m\leq n}
\mathcal{A}_{i_1i_2\cdots i_m}x_{i_1}x_{i_2}\cdots x_{i_m},\\
\mathcal{A}x^{m-1}& : = \Big( \sum\limits_{1\leq i_2,\cdots, i_m\leq n}
\mathcal{A}_{ji_2\cdots i_m}x_{i_2}\cdots x_{i_m} \Big)_{j=1,\ldots,n}.
\end{array}\right.
\ee
Note that $\mathcal{A} x^{m-1}$ is an $n$-dimensional vector.
Now we give some definitions of tensor eigenvalues
that are introduced in \cite{CDN14, CPZ08,L05,Q07}.

\begin{defi}
For $\mathcal{A}\in\mathtt{T}^m(\cpx^n)$, a number $\lambda\in \cpx$ is called a
$\mathtt{Z}$-eigenvalue of $\mathcal{A}$ if there exists a vector $u\in\cpx^n$ such that
\be \label{zdefinition}
\mathcal{A} u^{m-1}=\lambda u, \quad u^Tu =1.
\ee
(The superscript $^T$ denotes the transpose.)
Such $u$ is called a $\mathtt{Z}$-eigenvector associated with
$\lambda$, and such $(\lambda, u)$ is called a $\mathtt{Z}$-eigenpair.
\end{defi}

\begin{defi}\label{hdefinition}
For $\mathcal{A}\in\mathtt{T}^m(\cpx^n)$, a number $\lambda\in \cpx$
is called an $\mathtt{H}$-eigenvalue of $\mathcal{A}$
if there exists $0 \ne u\in  \cpx^n$ such that
\be \label{hdef}
\mathcal{A} u^{m-1}=\lambda u^{[m-1]}.
\ee
(The symbol $u^{[m-1]}$ denotes the vector such that $(u^{[m-1]})_i=(u_i)^{m-1}$
for $i=1,\ldots,n$).
Such $u$ is called an $\mathtt{H}$-eigenvector associated with $\lambda$,
and such $(\lambda, u)$ is called an $\mathtt{H}$-eigenpair.
\end{defi}

When $\lmd$ is a real $\mt{Z}$-eigenvalue (resp., $\mt{H}$-eigenvalue),
the associated $\mt{Z}$-eigenvector (resp., $\mt{H}$-eigenvector)
is not necessarily real. The $(\lambda, u)$ is called a real
$\mathtt{Z}$-eigenpair (resp., $\mathtt{H}$-eigenpair)
if both $\lmd$ and $u$ are real.
In the paper, we only discuss real eigenvalues. Throughout the paper, for convenience,
we call $\lmd$ a {\it real} $\mt{Z}$-eigenvalue (resp., $\mt{H}$-eigenvalue)
if {\it both $\lmd$ and $u$ are real}.

Tensor eigenvalues have broad applications in sciences and engineering.
They were introduced in Lim~\cite{L05} and Qi~\cite{Q05}.
The $\mt{Z}$-eigenvalus and $\mt{H}$-eigenvalues are useful
in signal processing, control, and diffusion imaging
(cf.~\cite{chen2013positive,ni2008eigenvalue,Qi03,Qi08,Qi10}).
For an introduction to the theory and applications of tensor computations,
we refer to \cite{Kolda2009, Lim13, QiSunWang07}.

When $\A$ is a real symmetric tensor, Cui et al. \cite{CDN14} discussed how
to compute all real $\mathtt{Z}$-eigenvalues and $\mathtt{H}$-eigenvalues.
There also exists work on computing partial eigenvalues, e.g., the biggest and smallest ones.
For the case $(m,n)=(3,2)$,  Qi et al. \cite{QWW09} discussed
how to compute  largest $\mathtt{Z}$-eigenvalues.
Shifted power methods are proposed for computing largest $\mathtt{Z}$-eigenvalues
(cf.~\cite{KM11,ZQY12}). In \cite{NW14}, a semidefinite relaxation method was proposed
to find best $\rank$-1 approximations, which can also be used for
computing  largest $\mathtt{Z}$-eigenvalues.
As shown in \cite{HiLi13,ZQY12}, it is NP-hard to compute
extreme eigenvalues of tensors.
For nonnegative tensors, the largest $\mathtt{H}$-eigenvalues
can be computed by methods based on
the Perron-Frobenius theorem (cf.~\cite{CPZ08,NQZ09}).

When $\A$ is a general nonsymmetric tensor, there is little work on
computing all real $\mathtt{Z}$-eigenvalues and $\mathtt{H}$-eigenvalues.
An elementary approach for this task is to solve the polynomial systems
\reff{zdefinition} and \reff{hdef} directly, for getting all complex solutions
by classical symbolic methods. This approach is typically quite
expensive and not practical, because of typically
high complexity of symbolic computations.

There are fundamental differences between symmetric and nonsymmetric
tensor eigenvalues.
It is known that every symmetric tensor has finitely many
$\mathtt{Z}$-eigenvalues (cf.~\cite{CDN14}).
However, this may not be true for nonsymmetric tensors.
A nonsymmetric tensor may have no real $\mathtt{Z}$-eigenvalues,
or may have infinitely many real ones.
We show such facts by the following examples.

\begin{example}\label{exampleofexistence}
Consider the tensor $\A \in\mathtt{T}^4(\mathbb{R}^2)$ such that
$\A_{ijkl}=0$ except
\[
\mathcal{A}_{1112}=\mathcal{A}_{1222}=1, \mathcal{A}_{2111}=\mathcal{A}_{2122}=-1.
\]
By the definition, $(\lambda, x)$ is a $\mathtt{Z}$-eigenpair if and only if
$$\left\{\begin{array}{rl}
&(x_1^2+x_2^2)x_2=\lambda x_1,\\
&-(x_1^2+x_2^2)x_1=\lambda x_2,\\
&x_1^2+x_2^2=1.
\end{array}\right.$$
One can check that the above does not have a real solution,
so $\mc{A}$ has no real $\mathtt{Z}$-eigenpairs.
By the definition, $(\lambda,x)$ is an $\mathtt{H}$-eigenpair if and only if
\[
\left\{\begin{array}{rl}
&(x_1^2+x_2^2)x_2=\lambda x_1^3,\\
&-(x_1^2+x_2^2)x_1=\lambda x_2^3,\\
& (x_1, x_2) \ne (0, 0).
\end{array}\right.
\]
One can similarly check that $\mc{A}$ has no real $\mathtt{H}$-eigenpairs.
\end{example}

\begin{example}\label{exampleofinfinity}
Consider the tensor $\mathcal{A}\in\mathtt{T}^4(\mathbb{R}^2)$
such that $\mc{A}_{ijkl}=0$ except
\[
\mathcal{A}_{1111}=\mathcal{A}_{2112}=1.
\]
Then, $(\lambda, x)$ is a $\mathtt{Z}$-eigenpair of $\mc{A}$  if and only if
\begin{equation}\left\{\begin{array}{cl}
x_1^3&=\lambda x_1,\\
x_1^2x_2&=\lambda x_2,\\
x_1^2+x_2^2&=1.
\end{array}
\right.
\end{equation}
One can check that every $\lambda \in [0,1]$ is a real $\mathtt{Z}$-eigenvalue,
with $4$ real $\mathtt{Z}$-eigenvectors $(\pm \sqrt{\lambda},\pm\sqrt{1-\lambda})$.
\end{example}

However, we would like to remark that the above examples are not general cases.
In fact, every generic nonsymmetric tensor has finitely many $\mt{Z}$-eigenvalues,
and its number can be given by explicit formula.
This is shown by Cartwright and Sturmfels \cite{CS13}.
Moreover, every nonsymmetric tensor has finitely many $\mt{H}$-eigenvalues.
By these facts, it is generally a well-posed question
to compute all real $\mt{Z}$-eigenvalues and $\mt{H}$-eigenvalues.

In this paper, we propose numerical methods for computing all
real $\mathtt{Z}$-eigenvalues (if there are finitely many ones)
and all $\mathtt{H}$-eigenvalues. For symmetric tensors, the
$\mathtt{Z}$-eigenvalues and $\mathtt{H}$-eigenvalues
are critical values of some polynomial optimization problems.
This property was significantly used in \cite{CDN14} for computing
all real eigenvalues. The method in \cite{CDN14}
is based on Jacobian SDP relaxations \cite{Nie-jac},
which are specially designed for solving polynomial optimization.
Indeed, the same kind of method can be used to
compute all local minima of polynomial optimization \cite{Ni13}.
However, the method in \cite{CDN14} are not suitable for computing
eigenvalues of nonsymmetric tensors, because their eigenvalues are no longer
critical values of polynomial optimization problems.

This paper is organized as follows. Section~2 gives some preliminaries
on polynomial optimization and tensor eigenvalues.
Section~3 proposes Lasserre type semidefinite relaxations for computing real
$\mt{Z}$-eigenvalues. If there are finitely many ones,
all the real $\mt{Z}$-eigenvalues can be found,
and each of them can be computed by solving a finite sequence of
semidefinite relaxations.
Section~4 proposes Lasserre type semidefinite relaxations for computing all real
$\mt{H}$-eigenvalues. Each of them can be computed by solving a finite sequence of
semidefinite relaxations. Numerical examples are shown in Section~5.

\section{Preliminaries}
\label{sec2:prelm}
\setcounter{equation}{0}

In this section, we review some basics in polynomial optimization.
We refer to \cite{LasBok,Lau} for surveys in the area.
In the space $\re^n$, the symbol $\| \cdot \|$ denotes the standard Euclidean norm.
Let $\mathbb{R}[x]$ be the ring of polynomials with real coefficients
and in variables $x:=(x_1, \ldots, x_n)$, and let $\mathbb{R}[x]_d$
be the set of real polynomials in $x$ whose degrees are at most $d$.
%
%
For a polynomial tuple $h=(h_1,h_2,\cdots, h_s)$,
the ideal generated by $h$ is the set
\[
I(h):= h_1\cdot \mathbb{R}[x]+h_2\cdot \mathbb{R}[x]+\cdots+h_s\cdot \mathbb{R}[x].
\]
The $k$-th truncation of $I(h)$ is the set
\[
I_k(h) :=  h_1\cdot \mathbb{R}[x]_{k-deg(h_1)}
+\cdots+h_s\cdot\mathbb{R}[x]_{k-deg(h_s)}.
\]
The complex and real algebraic varieties of $h$ are respectively defined as
\[
\mathcal{V}_{\cpx}(h):=\{x\in\cpx^n \, \mid \, h(x)=0 \},  \quad
\mathcal{V}_\mathbb{R}(h):=\mathcal{V}_{\cpx}(h) \cap \mathbb{R}^n.
\]
A polynomial $p$ is said to be sum of squares (SOS) if there exist
$p_1,p_2,\cdots p_r\in\mathbb{R}[x]$ such that $p=p_1^2+p_2^2+\cdots+p_r^2$.
The set of all SOS polynomials is denoted as $\Sig[x]$. For a given degree $m$, denote
\[
\Sig[x]_m :=  \Sig[x] \cap \mathbb{R}[x]_m .
\]
The quadratic module generated by a polynomial tupe $g=(g_1,\cdots, g_t)$ is the set

\[
Q(g) :=  \Sig[x]+ g_1 \cdot \Sig[x] +\cdots + g_t \cdot \Sig[x].
\]
The $k$-th truncation of the quadratic module $Q(g)$ is the set
\[
Q_k(g) := \Sig[x]_{2k} + g_1\cdot\Sig[x]_{2k-deg(g_1)}+
\cdots+ g_t \cdot\Sig[x]_{2k-deg(g_t)}.
\]
Note that if $g=\emptyset$ is an empty tuple, then
$Q(g) = \Sig[x]$ and $Q_k(g) = \Sig[x]_{2k}$.

Let $\N$ be the set of nonnegative integers.
For $x:=(x_1, \ldots,  x_n)$, $\af: = (\af_1,  \ldots,  \af_n)$
and a degree $d$, denote
\[
x^\af := x_1^{\af_1} \cdots x_n^{\af_n}, \quad
|\af| := \af_1 + \cdots + \af_n , \quad
\N_d^n := \{ \af \in \N^n: |\af| \leq d\}.
\]
Denote by $\re^{\N_d^n}$ the space of all real vectors $y$ that are
indexed by $\af \in \N_d^n$.
For $y \in \re^{\N_d^n}$, we can write it as
\[
y = (y_\af), \quad \af  \in \N_d^n.
\]
%
%
For $f = \sum_{ \af \in \N_d^n} f_\af x^\af \in \re[x]_d$ and $y \in \re^{\N_d^n}$,
we define the operation
\be \label{df:<fy>}
\langle f, y \rangle  \, := \,   \sum_{ \af \in \N_d^n} f_\af  y_\af .
\ee
For an integer $t \leq d$ and $y \in \re^{\N_d^n}$,  denote
the $t$-th truncation of $y$ as
\be \label{trunc:yt}
y|_{t} :=  (y_\af)_{ \af \in \N_t^n }.
\ee

Let $q \in \re[x]$ with $\deg(q) \leq 2k$. For each $y \in \re^{\N_{2k}^n}$,
$\langle q p^2, y \rangle$ is a quadratic form in $vec(p)$, the coefficient vector of
the polynomial $p$ with $\deg(qp^2) \leq 2k$.
Let $L_q^{(k)}(y)$ be the symmetric matrix such that
\be \label{df:Lqk(y)}
\langle q p^2, y \rangle =  vec(p)^T \Big( L_q^{(k)}(y) \Big) vec(p).
\ee
The matrix $L_q^{(k)}(y)$ is called the $k$-th localizing matrix of $q$
generated by $y$. It is linear in $y$.
For instance,  when $n=2$,  $k=2$ and $q=x_1x_2-x_1^2-x_2^2$,
\[
L_{x_1x_2-x_1^2-x_2^2}^{(2)}(y) =  \left(
\begin{array}{rcc}
y_{11}-y_{20}-y_{02} & y_{21}-y_{30}-y_{12} & y_{12}-y_{21}-y_{03} \\
y_{21}-y_{30}-y_{12} & y_{31}-y_{40}-y_{22} & y_{22}-y_{31}-y_{13} \\
y_{12}-y_{21}-y_{03} & y_{22}-y_{31}-y_{13} & y_{13}-y_{22}-y_{04} \\
\end{array}
\right).
\]
If $q= (q_1, \ldots, q_r)$ is a tuple of polynomials, we then define
\[
L_q^{(k)}(y) := \Big( L_{q_1}^{(k)}(y), \ldots, L_{q_r}^{(k)}(y) \Big).
\]
When $q=1$ (the constant $1$ polynomial),
$L_1^{(k)}(y)$ is called the $k$-th moment matrix generated by $y$, and we denote
\be \label{df:Mk(y)}
M_k(y) :=  L_1^{(k)}(y).
\ee
For instance,  when $n=2$ and $k=2$,
 \[
 M_2(y)= \left(
\begin{array}{cccccc}
      y_{00} & y_{10} & y_{01} & y_{20} & y_{11} & y_{02} \\
      y_{10} & y_{20} & y_{11} & y_{30} & y_{21} & y_{12} \\
      y_{01} & y_{11} & y_{02} & y_{21} & y_{12} & y_{03} \\
      y_{20} & y_{30} & y_{21} & y_{40} & y_{31} & y_{22} \\
      y_{11} & y_{21} & y_{12} & y_{31} & y_{22} & y_{13} \\
      y_{02} & y_{12} & y_{03} & y_{22} & y_{13} & y_{04} \\
\end{array}
  \right).
\]
For a degree $d$,  denote the monomial vector
\be \label{vec:[x]d}
[x]_d  := \bbm 1 \,  x_1 \,  \cdots \,  x_n \,  x_1^2 \,  x_1x_2
\,  \cdots \,  x_n^2 \,  \cdots \,  x_1^m \, \cdots \, x_n^m \ebm^T.
\ee

As shown in Example \ref{exampleofinfinity}, there may be infinitely many
$\mathtt{Z}$-eigenvalues. But this is not the case for a general nonsymmetric tensor.
In \reff{zdefinition}, a real $\mt{Z}$-eigenpair $(\lmd, u)$
of a tensor $\A$ is called {\it isolated}
if there exists $\eps>0$ such that no other real $\mt{Z}$-eigenpair $(\mu, v)$
satisfies $| \lmd - \mu | + \| u - v \| < \eps$.
Similarly, a real $\mt{Z}$-eigenvalue $\lmd$ is called {\it isolated}
if there exists $\eps>0$ such that no other real $\mt{Z}$-eigenvalue $\mu$
satisfies $| \lmd - \mu | < \eps$. In practice, we need to check whether
a $\mt{Z}$-eigenvalue or $\mt{Z}$-eigenpair is isolated or not. Denote
\[
F(\lmd, x) := \bbm x^Tx - 1 \\ \mc{A} x^{m-1} - \lmd x  \ebm.
\]
Then, $(\lambda, u)$ is a $\mathtt{Z}$-eigenpair of $\A$ if and only if $F(\lambda, u)=0$.
Let $J(\lmd, x)$ be the Jacobian matrix of
the vector function $F(\lmd, x)$ with respect to $(\lmd, x)$.

\begin{lemma}\label{isolated}
Let $\A \in \mt{T}^m(\re^n)$ and $\lmd$ be a real $\mathtt{Z}$-eigenvalue of $\A$.
\bit

\item [(i)] If $u$ is a real $\mathtt{Z}$-eigenvector associated with $\lmd$ and
$J(\lambda, u)$ is nonsingular, then $(\lmd, u)$ is isolated.

\item [(ii)] If $u_1, \ldots, u_N$ are the all real $\mathtt{Z}$-eigenvectors of $\A$
associated to $\lmd$ and each $J(\lambda, u_i)$ is nonsingular,
then $\lambda $ is an isolated $\mathtt{Z}$-eigenvalue of $\A$.

\eit

\end{lemma}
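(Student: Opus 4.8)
The plan is to handle the two parts in order, deriving (i) directly from the inverse function theorem and then bootstrapping it to obtain (ii) by a compactness argument.

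For part (i), I would note that $F(\lambda, x)$ is a polynomial, hence $C^\infty$, map from $\mathbb{R}^{n+1}$ into $\mathbb{R}^{n+1}$, that $F(\lambda, u) = 0$ by assumption, and that $J(\lambda, u)$ is by hypothesis a nonsingular square $(n+1)\times(n+1)$ matrix. The inverse function theorem then supplies an open neighborhood $W$ of $(\lambda, u)$ on which $F$ is a diffeomorphism onto its image; in particular $(\lambda, u)$ is the only zero of $F$ in $W$. Since a pair $(\mu, v)$ is a $\mathtt{Z}$-eigenpair precisely when $F(\mu, v) = 0$, choosing $\eps > 0$ small enough that $\{(\mu, v) : |\lambda - \mu| + \|u - v\| < \eps\} \subseteq W$ shows that $(\lambda, u)$ is isolated.

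For part (ii), I would argue by contradiction: suppose $\lambda$ is not an isolated $\mathtt{Z}$-eigenvalue, so there is a sequence of real $\mathtt{Z}$-eigenpairs $(\mu_k, v_k)$ with $\mu_k \neq \lambda$ and $\mu_k \to \lambda$. The normalization $v_k^T v_k = 1$ keeps $\{v_k\}$ on the unit sphere, so after passing to a subsequence $v_k \to v$ with $v^T v = 1$. Letting $k \to \infty$ in $\mathcal{A} v_k^{m-1} = \mu_k v_k$ and using continuity of the polynomial map $\mathcal{A}x^{m-1}$ gives $\mathcal{A} v^{m-1} = \lambda v$, so $v$ is a real $\mathtt{Z}$-eigenvector associated to $\lambda$; by the hypothesis of (ii) we must have $v = u_i$ for some $i$. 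Then $(\mu_k, v_k) \to (\lambda, u_i)$, every $(\mu_k, v_k)$ is a $\mathtt{Z}$-eigenpair, and $(\mu_k, v_k) \neq (\lambda, u_i)$ for every $k$ since $\mu_k \neq \lambda$; this contradicts part (i) applied to $(\lambda, u_i)$, whose Jacobian $J(\lambda, u_i)$ is nonsingular. Hence $\lambda$ is isolated.

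I do not expect a genuine obstacle here; the argument is standard. The one place to be careful is the limit extraction in (ii): it is exactly the constraint $x^T x = 1$ that makes the eigenvectors bounded and hence lets Bolzano--Weierstrass apply, and it is the finiteness of $u_1, \ldots, u_N$ (implicit in the hypothesis of (ii)) that forces the limit $v$ to equal one of them. One could add the remark that this finiteness is in fact automatic: by part (i) every $\mathtt{Z}$-eigenvector associated to $\lambda$ is isolated, and the set of such eigenvectors is closed and bounded, so being compact with only isolated points it must be finite.
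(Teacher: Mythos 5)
Your proof is correct. Part (ii) is essentially the same argument as in the paper: extract a convergent subsequence of eigenvectors using the sphere constraint, pass to the limit to get a real $\mathtt{Z}$-eigenpair $(\lambda, \hat u)$, use finiteness to force $\hat u = u_i$, and contradict part (i). The only real divergence is in part (i). You invoke the inverse function theorem as a black box: $F$ is smooth, $J(\lambda,u)$ is invertible, so $F$ is a local diffeomorphism and $(\lambda,u)$ is the unique zero in a neighborhood. The paper instead argues by contradiction with its bare hands: it takes a sequence of eigenpairs $(\lambda_l, u^{(l)}) \to (\lambda,u)$, writes a second-order Taylor expansion $0 = J(\lambda,u) d^{(l)} + O(\|d^{(l)}\|^2)$ for $d^{(l)}$ the difference vector, normalizes, extracts a convergent subsequence $d^{(l)}/\|d^{(l)}\| \to \hat d$ with $\|\hat d\|=1$, and concludes $J(\lambda,u)\hat d = 0$, contradicting nonsingularity. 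The two routes are equivalent in content (the paper's argument is essentially the local-injectivity half of the IFT proof, specialized to this map), so the choice is stylistic: your version is shorter and more conceptual, the paper's is self-contained and elementary. Your closing remark about finiteness of the eigenvector set being automatic is a nice observation, though to make it precise you would want to phrase the hypothesis as ``$J(\lambda,v)$ is nonsingular for every real $\mathtt{Z}$-eigenvector $v$ of $\lambda$'' (rather than only for an a priori given finite list), since otherwise the statement is circular; with that wording, the set of eigenvectors is a compact subset of the sphere consisting of isolated points and is therefore finite.
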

\begin{proof}
(i) We prove it by a contradiction argument. Suppose otherwise the $\mathtt{Z}$-eigenpair
$(\lambda, u)$ is not isolated. Then there exists a sequence
$ \{ (\lambda_l, u^{(l)}) \}_{l=1}^{\infty}$  of $\mathtt{Z}$-eigenpairs such that each
$(\lambda_l, u^{(l)}) \neq (\lmd , u)$ and $(\lambda_l, u^{(l)}) \to (\lambda, u)$. Let
\[
d^{(l)} = \bbm  \lmd_l - \lmd \\  u^{(l)} - u \ebm.
\]
By the second order Taylor expansion, we have
\[
0=F(\lambda_l,  u^{(l)}) = F(\lmd,u)+J(\lmd, u) d^{(l)}
+O( \| d^{(l)} \|^2 ).
\]
Note that $F(\lmd,u)=0$ and each $\| d^{(l)} \| \ne 0$. The above implies that
\[
J(\lmd, u) \Big( d^{(l)} / \| d^{(l)} \| \Big) = O( \| d^{(l)} \| ).
\]
Since each $d^{(l)} / \| d^{(l)} \| $ has unit length,
we can generally assume that $d^{(l)} / \| d^{(l)} \|  \to \hat{d}$.
Then $\|\hat{d} \|=1$ and $\hat{d} \ne 0$. The above implies that
\[
J(\lmd, u) \hat{d} = 0,
\]
contradicting the nonsingularity of $J(\lmd,u)$.
So, $(\lmd,u)$ is isolated.

(ii) Suppose otherwise that $\lmd$ is not isolated. Then there exists a sequence
$\{ \lmd_l \}$ of distinct real $\mt{Z}$-eigenvalues such that
$\lmd_l \to \lmd$. Each $\lmd_l$ has a real $\mt{Z}$-eigenvector $u^{(l)}$.
Since  $u^{(l)}$ has unit length, we can generally assume $u^{(l)} \to \hat{u}$.
Thus, $(\lmd_l, u^{(l)}) \to (\lmd, \hat{u})$. Clearly, $(\lmd, \hat{u})$
is also a real $\mt{Z}$-eigenpair, but it is not isolated.
By the assumption, $\hat{u}$ is one of $u_1, \ldots, u_N$.
So, one of $(\lmd, u_i)$ is not isolated, which is a contradiction.
\end{proof}

\section{Computing $\mathtt{Z}$-eigenvalues}
\label{sec3:Zeig}
\setcounter{equation}{0}

Let $\A \in \mt{T}^m(\re^n)$ be a tensor.
Recall that $(\lmd, u)$ is a $\mt{Z}$-eigenpair of $\A$ if
$\A u^{m-1} = \lmd u$ and $u^T u = 1$. So,
\[
\lmd = \lmd u^T u =  u^T \A u^{m-1} =  \A u^m .
\]
Hence, $u$ is a $\mt{Z}$-eigenvector if and only if
\[
\A u^{m-1} = (\A u^m) u, \quad u^T u = 1,
\]
and the associated $\mt{Z}$-eigenvalue is $\A u^m$.
A general nonsymmetric tensor in $\mt{T}^m(\re^n)$ has finitely many
$\mt{Z}$-eigenvalues, as shown in \cite{CS13}.
For special tensors, there might be infinitely many ones
(cf.~Example \ref{exampleofinfinity}).

In this section, we aim at computing all real $\mathtt{Z}$-eigenvalues
when there are finitely many ones.
Let $h$ the polynomial tuple:
\be \label{df:h:Zeig}
h = ( \A x^{m-1} - (\A x^m)x, \, x^Tx-1 ).
\ee
Then, $u$ is a $\mathtt{Z}$-eigenvector of $\mathcal{A}$ if and only if $h(u)=0$.
Let $Z(\re,\A)$ denote the set of real $\mathtt{Z}$-eigenvalues of $\A$.
If it is a finite set, we list $Z(\re,\A)$ monotonically as
\[
\lambda_1< \lambda_2< \cdots < \lmd_N.
\]
We aim at computing them sequentially, from the smallest to the biggest.

\subsection{The smallest $\mathtt{Z}$-eigenvalue}

To compute the smallest $\mathtt{Z}$-eigenvalue $\lmd_1$,
we consider the polynomial optimization problem
\be \label{smallesteigen}
\min \quad f(x):=\mathcal{A} x^m \, \quad \,
 {\mbox s.t.}  \quad h(x) = 0,
\ee
where $h$ is as in \reff{df:h:Zeig}.
Note that $u$ is a $\mathtt{Z}$-eigenvector if and only if $h(u)=0$,
with the $\mt{Z}$-eigenvalue $f(u)$.
The optimal value of (\ref{smallesteigen}) is $\lmd_1$, if it exists. Let
\be \label{df:k0:Zeg}
k_0 =  \lceil (m+1)/2 \rceil .
\ee
Lasserre's hierarchy \cite{L01} of semidefinite relaxations
for solving \reff{smallesteigen} is
\be  \label{relaxsdp}
\left\{  \begin{array}{rl}
 f_1^{1,k} := \min  & \langle f,y\rangle\\
 {\mbox s.t.}& \langle 1,y\rangle = 1, \, L^{(k)}_{h}(y)=0,   \\
 & M_k(y)\succeq 0, y \in \re^{ \N_{2k}^n },
 \end{array} \right.
\ee
for the orders $k=k_0, k_0+1,\ldots$.
See \reff{df:Lqk(y)}-\reff{df:Mk(y)} for the notation
$L^{(k)}_{h}(y)$ and $M_k(y)$. In the above,
$X\succeq 0$ means that the matrix $X$ is positive semidefinite.
The dual optimization problem of (\ref{relaxsdp}) is
\be  \label{relaxsos}
\left\{ \begin{array}{rl}
 f_1^{2,k} :=\max& \gamma\\
 {\mbox s.t.}& f-\gamma\in I_{2k}(h)+\Sig[x]_{2k}.
 \end{array}  \right.
\ee
As in \cite{L01}, it can be shown that for all $k$
\[
 f_1^{2,k}  \leq   f_1^{1,k}  \leq \lambda_1
\]
and the sequences $\{  f_1^{1,k}  \}$ and $\{  f_1^{2,k}  \}$
are monotonically increasing.

\begin{theorem}\label{Zthm:cvg:lmd1}
Let $\mathcal{A}\in\mathtt{T}^m(\re^n)$ and
$Z(\re, \A)$ be the set of its real $\mt{Z}$-eigenvalues.
Then we have:

\bit

\item [(i)] The set $Z(\re, \A) = \emptyset$ if and only if
the semidefinite relaxation \reff{relaxsdp} is infeasible
for some order $k$.

\item [(ii)] If $Z(\re, \A) \ne \emptyset$ and
$\lmd_1$ is the smallest $\mt{Z}$-eigenvalue, then
\be \label{lmd1:asym:cvg}
\lim_{k \to \infty} f_1^{2,k} =   \lim_{k \to \infty} f_1^{1,k}  = \lambda_1.
\ee
If, in addition, $Z(\re, \A)$ is a finite set, then for all $k$ sufficiently big
\be \label{lmd1:finite:cvg}
f_1^{2,k} =     f_1^{1,k}  = \lambda_1.
\ee

\item [(iii)] Suppose $y^*$ is a minimizer of (\ref{relaxsdp}).
If there exists $t\leq k$ such that
\be \label{flat:Mt(y*)}
{\mbox rank} \, M_{t-k_0}(y^*)={\mbox rank}\, M_t(y^*),
\ee
then $f_1^{1,k}  = \lambda_1$ and there are $r:=\rank M_t(y^*)$
distinct real $\mt{Z}$-eigenvectors $u_1,\ldots,u_r$
associated with $\lmd_1$.

\item [(iv)] Suppose $Z(\re, \A)$ is a finite set.
If there are finitely many real $\mt{Z}$-eigenvectors
associated with $\lmd_1$, then, for all $k$ big enough and
for every minimizer $y^*$ of (\ref{relaxsdp}),
\reff{flat:Mt(y*)} is satisfied for some $t\leq k$.

\eit

\end{theorem}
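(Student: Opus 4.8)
The plan is to prove the four parts of Theorem~\ref{Zthm:cvg:lmd1} by combining the structural facts about $Z(\re,\A)$ recorded above with the now-standard convergence theory of Lasserre's hierarchy, together with the finite-convergence and flat-truncation results of Nie. The key observation that makes everything work is that the feasible set of \reff{smallesteigen}, namely $\mc{V}_\re(h)$, is exactly the set of real $\mt{Z}$-eigenvectors of $\A$; it is contained in the unit sphere $\{x^Tx=1\}$ and hence is compact. Thus \reff{smallesteigen} always has an optimal value whenever $Z(\re,\A)\ne\emptyset$, and the image $f(\mc{V}_\re(h)) = Z(\re,\A)$.

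For part (i): if $Z(\re,\A)=\emptyset$ then $\mc{V}_\re(h)=\emptyset$, and since $h$ contains $x^Tx-1$ the ideal $I(h)$ together with $1-x^Tx$ (equivalently, the constraint set is a subset of the ball) puts us in the Positivstellensatz setting where emptiness is certified at a finite truncation; concretely, $-1 \in I_{2k}(h) + \Sig[x]_{2k}$ for some $k$, which forces \reff{relaxsos} to be unbounded and, by weak duality, \reff{relaxsdp} to be infeasible. Conversely, if \reff{relaxsdp} is infeasible for some $k$, then there is no representing measure supported on $\mc{V}_\re(h)$, and since any $u\in\mc{V}_\re(h)$ would supply the moment sequence $y=[u]_{2k}$ as a feasible point, $\mc{V}_\re(h)$ must be empty. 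For part (ii): the asymptotic convergence \reff{lmd1:asym:cvg} is exactly Lasserre's theorem applied to \reff{smallesteigen}, valid because the constraint $x^Tx-1$ makes the quadratic module archimedean (or one can add the redundant ball constraint). For the finite convergence \reff{lmd1:finite:cvg} when $Z(\re,\A)$ is finite: here the set of minimizers of \reff{smallesteigen} is the (finite, since it is an algebraic set of isolated points intersected with a compact set) set of $\mt{Z}$-eigenvectors attaining $\lmd_1$; I would invoke Nie's finite-convergence result \cite{Nie-jac} (or the relevant result on Lasserre's hierarchy with finitely many minimizers) — one needs the gradient/optimality conditions to be "nondegenerate enough," but the cleanest route is to cite the general statement that Lasserre's hierarchy has finite convergence when the real variety $\mc{V}_\re(h)$ is finite, which holds here because $Z(\re,\A)$ finite combined with finitely many eigenvectors per eigenvalue (not yet assumed, but handled in (iv)) — more robustly, one cites that finiteness of the minimizer set suffices.

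For part (iii): this is a direct application of the flat extension / flat truncation theorem of Curto--Fialkow and Henrion--Lasserre. If $y^*$ satisfies \reff{flat:Mt(y*)}, then $M_t(y^*)$ admits a flat extension, so $y^*|_{2t}$ is the moment sequence of an $r$-atomic measure $\mu = \sum_{i=1}^r \omega_i \delta_{u_i}$ with $r = \rank M_t(y^*)$; the localizing conditions $L_h^{(k)}(y^*)=0$ force $h(u_i)=0$, i.e. each $u_i$ is a real $\mt{Z}$-eigenvector, and $\langle f, y^*\rangle = \sum \omega_i f(u_i) = f_1^{1,k}$ forces each $f(u_i)=\lmd_1$ (since $\lmd_1$ is the minimum and the $\omega_i>0$ sum to $1$). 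Distinctness of the $u_i$ is part of the atomic-measure extraction. The only subtlety is that we must know $f_1^{1,k}\le\lmd_1$ a priori (already recorded before the theorem) to conclude $f_1^{1,k}=\lmd_1$. For part (iv): this is the flat-truncation-is-eventually-satisfied theorem of Nie \cite{Nie-jac}, whose hypotheses are precisely that the minimizer set of the polynomial optimization problem is finite; under "$Z(\re,\A)$ finite" and "finitely many eigenvectors for $\lmd_1$," the set of optimizers of \reff{smallesteigen} is a finite subset of the unit sphere, so that theorem applies and gives, for all large $k$ and every minimizer $y^*$, some $t\le k$ with \reff{flat:Mt(y*)}.

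The main obstacle is part (iv) (and the finite-convergence half of (ii)): these are not elementary and genuinely rely on the deep results of \cite{Nie-jac} on finite convergence and flat truncation of Lasserre's hierarchy. The work there is to verify that the hypotheses of those theorems are met — in particular that the optimization problem \reff{smallesteigen} has only finitely many global minimizers, which is where the standing assumption "$Z(\re,\A)$ is a finite set" (plus finiteness of the eigenvector fibre over $\lmd_1$) enters. Everything else is a matter of correctly quoting the Curto--Fialkow flat-extension theorem, Lasserre's asymptotic-convergence theorem, and weak SDP duality, and checking the archimedean condition, which is immediate from the presence of $x^Tx-1$ in $h$.
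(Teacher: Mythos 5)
Parts (i) and (iii) of your proposal are essentially the paper's own argument: (i) is Positivstellensatz plus weak duality one way and the trivial feasibility of $[u]_{2k}$ the other way; (iii) is the Curto--Fialkow flat-extension theorem, and you correctly observe that the sandwich $f_1^{1,k}\le f(u_i)$ and $f_1^{1,k}\le\lmd_1$ forces all $f(u_i)=\lmd_1$. The asymptotic part of (ii) is also right (archimedeanness from $x^Tx-1\in h$, then Lasserre's theorem).

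The genuine gap is in the finite-convergence half of (ii). You propose to cite a theorem of the form ``Lasserre's hierarchy has finite convergence when $\mc{V}_\re(h)$ is finite'' and fall back on ``finiteness of the minimizer set suffices,'' but neither applies here. The hypothesis in (ii) is only that $Z(\re,\A)$ is finite, which does \emph{not} force $\mc{V}_\re(h)$ (the set of $\mt{Z}$-eigenvectors) to be finite, nor even the minimizer set: $\lmd_1$ could in principle have a continuum of eigenvectors on the sphere while $Z(\re,\A)$ remains finite. Moreover, ``finitely many global minimizers $\Rightarrow$ finite convergence'' is not a known theorem for Lasserre's hierarchy without additional regularity (LICQ, strict complementarity, second-order sufficiency, \`a la Nie's optimality-conditions paper), and those regularity conditions are not among your hypotheses. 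So the route you sketch does not close. The paper sidesteps this by exploiting finiteness of the \emph{image} $f(\mc{V}_\re(h))=Z(\re,\A)=\{\lmd_1,\ldots,\lmd_N\}$ rather than of the domain: take the univariate Lagrange interpolants $b_i(t)$ with $b_i(\lmd_j)=\delta_{ij}$, set $s=\sum_i(\lmd_i-\lmd_1)\,b_i(f(x))^2\in\Sig[x]$, and observe that $\hat f:=f-\lmd_1-s$ vanishes identically on $\mc{V}_\re(h)$. Real Nullstellensatz then gives $\hat f^{2\ell}+q\in I(h)$ for some $q\in\Sig[x]$, and Lemma~2.1 of \cite{Ni13siam} supplies a uniform-degree SOS/ideal certificate for $f-(\lmd_1-\eps)$, which yields $f_1^{2,k_2}\ge\lmd_1$ for a fixed $k_2$ and hence finite convergence. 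This explicit construction is what your proposal is missing.

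Part (iv) is the right idea (flat truncation is eventually satisfied when the minimizer set is finite and finite convergence holds; the paper cites Theorem~2.6 of \cite{Ni13mp}), but since it relies on the finite convergence of (ii), your version inherits the gap there.
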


\begin{proof}
%
%
(i) ``if" direction: this is obvious. If $\A$ has a real $\mt{Z}$-eigenpair
$(\lmd,u)$, then $[u]_{2k}$ (see \reff{vec:[x]d} for the notation)
is feasible for \reff{relaxsdp}, a contradiction.

``only if" direction: If $\A$ has no real $\mt{Z}$-eigenvalues,
then the equation $h(x)=0$ has no real solutions.
By Positivstellensatz (cf.~\cite{BCR}), $-1 \in I(h)  + \Sig[x]$.
So, when $k$ is big enough, $-1 \in I_{2k}(h)  + \Sig[x]_{2k}$,
and then \reff{relaxsos} is unbounded from above.
By weak duality, \reff{relaxsdp} must be infeasible,
for all $k$ big enough.

(ii)  Note that $x^Tx-1$ is a polynomial in the tuple $h$.
So, $-(x^Tx-1)^2 \in I(h)$ and the set $-(x^Tx-1)^2 \geq 0$ is compact.
The ideal $I(h)$ is archmedean (cf.~\cite{L01}).  The asymptotic convergence
\reff{lmd1:asym:cvg} can be implied by Theorem~4.2 of \cite{L01}.

Next, we prove the finite convergence \reff{lmd1:finite:cvg}
when $Z(\re, \A) \ne \emptyset$ is a finite set.
Write $Z(\re, \A) = \{ \lmd_1, \ldots, \lmd_N \}$, with
$\lmd_1 < \cdots < \lmd_N$.
Let $b_1, \ldots, b_N \in \re[t]$
be the univariate real polynomials in $t$ such that
$b_i(\lmd_j) = 0$ when $i \ne j$ and $b_i(\lmd_j) = 1$ when $i = j$.
For $i=1, \ldots, N$, let
\[
s_i :=  ( \lmd_i -  \lmd_1) \Big( b_i (  f(x) )  \Big)^2.
\]
Let $s := s_1+ \cdots + s_N$. Then,
$s \in \Sig[x]_{2k_1}$ for some $k_1 > 0$.
The polynomial
\[
\hat{f}:=f - \lmd_1 - s
\]
vanishes identically on $\mc{V}_{\re}(h)$.
By Real Nullstellensatz (cf.~\cite[Corollary~4.1.8]{BCR}),
there exist an integer $\ell>0$ and $q \in \Sig[x]$ such that
\[
\hat{f}^{2\ell} + q \in  I(h).
\]
For all $\eps >0$ and $c>0$, we can write
\[
\hat{f} + \eps = \phi_\eps + \theta_\eps \, \quad
\mbox{ where}
\]
\[
\phi_\eps = -c \eps^{1-2\ell} (\hat{f}^{2\ell}+q), \quad
\theta_\eps = \eps \Big(1 + \hat{f}/\eps + c ( \hat{f}/\eps)^{2\ell} \Big)
+ c \eps^{1-2\eps} q.
\]
By Lemma~2.1 of \cite{Ni13siam}, when $c \geq \frac{1}{2\ell}$,
there exists $k_2$ such that, for all $\eps >0$,
\[
\phi_\eps \in I_{2k_2}(h), \quad \theta_\eps \in \Sig[x]_{2k_2}.
\]
Hence, we can get
\[
f - (\lmd_1 -\eps) = \phi_\eps + \sig_\eps,
\]
where $\sig_\eps = \theta_\eps + s\in \Sig[x]_{2k_2}$ for all $\eps >0$.
This implies that, for all $\eps>0$,
$\gamma = \lmd_1-\eps$ is feasible in \reff{relaxsos} for the order $k_2$.
Thus, we get $f_1^{2,k_2} \geq \lmd_1$.
Note that $f_1^{2,k} \leq f_1^{1,k} \leq \lmd_1$ for all $k$ and
the sequence $\{  f_1^{2,k}  \}$ is monotonically increasing.
So, \reff{lmd1:finite:cvg} must be true when $k \geq k_2$.

(iii) Note that $M_t(y^*) \succeq 0$ and $L_h^{(t)}(y^*) = 0$, because $t\leq k$.
When \reff{flat:Mt(y*)} is satisfied, by Theorem~1.1
of \cite{CF05}, there exist $r:= \rank \, M_t(y^*)$ vectors
$u_1,\ldots, u_r \in \mc{V}_{\re}(h)$ such that
\[
y^*|_{2t} = c_1 [u_1]_{2t} + \cdots + c_r [u_r]_{2t},
\]
with numbers $c_1, \ldots, c_r > 0$. The condition $\langle 1, y^* \rangle =1$ implies that
\[
c_1 + \cdots + c_r = 1.
\]
By the notation $\langle \cdot, \cdot \rangle $ as in \reff{df:<fy>},
we can see that $\langle f, [u_i]_{2k} \rangle = f(u_i)$, so
\[
f_1^{1,k} = \langle f, y^* \rangle =
c_1 f(u_1) + \cdots + c_r f(u_r),
\]
\[
f_1^{1,k} \leq f(u_i) \quad i = 1, \ldots, r,
\]
because each $[u_i]_{2k}$ is feasible for \reff{relaxsdp}. Thus,
\[
f_1^{1,k} =  f(u_1) =  \cdots = f(u_r).
\]
Also note that $f_1^{1,k} \leq \lmd_1$ and each $u_i$ is a
$\mt{Z}$-eigenvector. So,
\[
f_1^{1,k} =  f(u_1) =  \cdots = f(u_r) = \lmd_1.
\]

(iv) When $Z(\re, \A)$ is a finite set, the sequences $\{ f_1^{1,k} \}$
and $\{ f_1^{2,k} \}$ have finite convergence to $\lmd_1$, by item (iii).
If \reff{smallesteigen} has finitely many minimizers,
i.e., $\lmd_1$ has finitely many real $\mt{Z}$-eigenvectors,
the rank condition \reff{flat:Mt(y*)} must be satisfied
when $k$ is sufficiently big. This can be implied by
Theorem~2.6 of \cite{Ni13mp}.
\end{proof}

\begin{remark}
\label{rmk:Zeig:lmd1}  \rm
(1) The rank condition \reff{flat:Mt(y*)} can be used as a criterion
to check whether $f_1^{1,k}  = \lambda_1$ or not.
If it is satisfied,  then we can get $r$
distinct minimizers $u_1,  \ldots,  u_r$ of \reff{smallesteigen},
i.e., each $u_i$ is $\mt{Z}$-eigenvector of $\A$.
The vectors $u_i$ can be computed by the method in Henrion and Lasserre~\cite{HL05}. \\
(2) Suppose \reff{flat:Mt(y*)} holds.
If $\rank\,  M_k(y^*)$ is maximum among the set of all optimizers of \reff{relaxsdp},
then we can get all mimizers of \reff{smallesteigen} (cf.~\cite[\S6.6]{Lau}), i.e.,
we can get all real $\mt{Z}$-eigenvectors associated to $\lmd_1$.
When \reff{relaxsdp}-\reff{relaxsos} are solved by
primal-dual interior point methods, typically we can get
all $\mt{Z}$-eigenvectors associated to $\lmd_1$,
if there are finitely many ones.
\end{remark}

\subsection{Bigger $\mathtt{Z}$-eigenvalues}\label{ithzeigenvalue}

Suppose the $i$th smallest real $\mt{Z}$-eigenvalue $\lmd_i$ is isolated and is known.
We want to determine whether the next bigger one $\lmd_{i+1}$ exists or not.
If it exists, we show how to compute it.
Let $\dt >0$ be a small number.
Consider the polynomial optimization problem
\be  \label{itheigenpair}
\left\{ \begin{array}{rl}
\min & f(x)\\
{\mbox s.t.}& h(x) = 0, \, f(x)\geq \lambda_{i} + \dt.
\end{array} \right.
\ee
Clearly, the optimal value of \reff{itheigenpair} is the smallest $\mt{Z}$-eigenvalue
that is greater than or equal to $\lmd_i + \dt$.  Let
\be \label{omg:lmdi+dt}
\omega(\lmd_i + \dt) := \min \{ \lmd \in Z(\re, \A) : \, \lmd \geq \lmd_i+\dt \}.
\ee
Lasserre's hierarchy of semidefinite relaxations for
solving \reff{itheigenpair} is
\be \label{ithrelaxsdp}
\left\{ \begin{array}{rl}
f_{i+1}^{1,k} := \min& \langle f,y\rangle\\
 {\mbox s.t.}& \langle 1, y \rangle = 1, \,  L^{(k)}_{h}(y)=0, \\
 & M_k(y)\succeq 0, \, L_{f-\lambda_{i}-\dt}^{(k)}(y)\succeq 0,
 y \in \re^{ \N_{2k}^n },
\end{array} \right.
\ee
for the orders $k = k_0, k_0+1, \ldots$.
The dual problem of \reff{ithrelaxsdp} is
\be \label{ithrelaxsos}
\left\{ \begin{array}{rl}
 f_{i+1}^{2,k} :=\max  & \gamma\\
 {\mbox s.t.}&  f-\gamma  \in I_{2k}(h)+Q_k(f-\lambda_{i}-\dt).
 \end{array} \right.
\ee

Similar to Theorem \ref{Zthm:cvg:lmd1}, we have the following convergence result.

\begin{theorem}\label{Zcvg:lmd:i+1}
Let $\mathcal{A}\in\mathtt{T}^m(\re^n)$ and $Z(\re, \A)$
be the set of real $\mt{Z}$-eigenvalues of $\A$.
Suppose $\lmd_i \in Z(\re,\A)$. Then we have:

\bit

\item [(i)] The intersection $Z(\re, \A) \cap [\lmd_i+\dt, +\infty) = \emptyset$ if and only if
the semidefinite relaxation \reff{ithrelaxsdp} is infeasible
for some order $k$.

\item [(ii)] If $Z(\re, \A) \cap [\lmd_i+\dt, +\infty) \ne \emptyset$, then
\be \label{lmd:i+1:cvg:asmp}
\lim_{k \to \infty} f_{i+1}^{2,k} =   \lim_{k \to \infty} f_{i+1}^{1,k}  = \omega(\lmd_i+\dt).
\ee
If, in addition, $Z(\re, \A) \cap [\lmd_i+\dt, +\infty)$ is a finite set, then
\be \label{lmd:i+1:fntcvg}
f_{i+1}^{2,k}  =   f_{i+1}^{1,k}  = \omega(\lmd_i+\dt)
\ee
for all $k$ sufficiently big.

\item [(iii)] Suppose $y^*$ is a minimizer of (\ref{ithrelaxsdp}).
If \reff{flat:Mt(y*)} is satisfied for some $t\leq k$, then
$f_{i+1}^{1,k}  = \omega(\lmd_i+\dt)$ and
there are $r:=\rank M_t(y^*)$ distinct real $\mt{Z}$-eigenvectors $u_1,\ldots,u_r$,
associated with the $\mt{Z}$-eigenvalue $\omega(\lmd_i+\dt)$.

\item [(iv)]  Suppose $Z(\re, \A) \cap [\lmd_i+\dt, +\infty)$
is a finite set and $\omega(\lmd_i+\dt)$ has finitely many $\mt{Z}$-eigenvectors.
Then, for all $k$ big enough, and for every minimizer $y^*$ of (\ref{ithrelaxsdp}),
there exists $t\leq k$ satisfying  \reff{flat:Mt(y*)}.

\eit

\end{theorem}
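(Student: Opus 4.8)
The plan is to follow the proof of Theorem~\ref{Zthm:cvg:lmd1} almost verbatim, with the SOS cone $\Sig[x]$ replaced by the quadratic module $Q(g)$, where $g := f - \lmd_i - \dt$, and the moment condition $M_k(y)\succeq 0$ augmented by the localizing condition $L_g^{(k)}(y)\succeq 0$. Write $S := \mc{V}_\re(h)\cap\{g\ge 0\}$. A point of $S$ is exactly a real $\mt{Z}$-eigenvector $u$ with $f(u)\ge\lmd_i+\dt$, so on $S$ the function $f$ takes only values lying in $Z(\re,\A)\cap[\lmd_i+\dt,+\infty)$, and, whenever $S\ne\lm$, $\min_S f = \omega(\lmd_i+\dt)$. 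Moreover $S\subseteq\{x^Tx=1\}$ is compact, and the module $I(h)+Q(g)$ is archimedean because $1-x^Tx\in I(h)$. For part (i), the ``if'' direction is immediate: if $(\lmd,u)$ is a $\mt{Z}$-eigenpair with $\lmd\ge\lmd_i+\dt$, then $[u]_{2k}$ is feasible for \reff{ithrelaxsdp} for every $k$, the localizing condition holding since $f(u)\ge\lmd_i+\dt$. For ``only if'', if $S=\lm$ then the Positivstellensatz (cf.~\cite{BCR}) gives $-1\in I(h)+Q(g)$, hence $-1\in I_{2k}(h)+Q_k(g)$ for all large $k$; pairing this identity with any $y$ feasible for \reff{ithrelaxsdp} yields $-1=\langle -1,y\rangle\ge 0$, a contradiction, so \reff{ithrelaxsdp} is infeasible. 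The asymptotic convergence \reff{lmd:i+1:cvg:asmp} then follows from Theorem~4.2 of \cite{L01} applied to the archimedean module $I(h)+Q(g)$, together with the bounds $f_{i+1}^{2,k}\le f_{i+1}^{1,k}\le\omega(\lmd_i+\dt)$.

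The one genuinely new ingredient is the finite convergence \reff{lmd:i+1:fntcvg}. Assume $Z(\re,\A)\cap[\lmd_i+\dt,+\infty)=\{\mu_1<\cdots<\mu_M\}$ is finite, so $\omega(\lmd_i+\dt)=\mu_1$, and (as holds for the small $\dt$ used in practice) that $\lmd_i+\dt\notin Z(\re,\A)$, so $\mu_j>\lmd_i+\dt$ for all $j$. Let $b_1,\ldots,b_M\in\re[t]$ be the univariate Lagrange polynomials at the nodes $\mu_1,\ldots,\mu_M$ and set
\[
\tau := \sum_{j=1}^{M}\frac{\mu_j-\mu_1}{\mu_j-\lmd_i-\dt}\,(b_j(f))^2 \in \Sig[x],
\]
which is a sum of squares since every coefficient is nonnegative. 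On the set $\{x\in S:\, f(x)=\mu_k\}$ one has $\tau = (\mu_k-\mu_1)/(\mu_k-\lmd_i-\dt)$ and $g=\mu_k-\lmd_i-\dt$, hence $g\tau = \mu_k-\mu_1 = f-\mu_1$ there; since $f$ only takes the values $\mu_1,\ldots,\mu_M$ on $S$, the polynomial $\hat f := f - \mu_1 - g\tau$ vanishes identically on $S$. By the Positivstellensatz for the preordering generated by $g$ (cf.~\cite{BCR}), there are $\ell>0$ and $\sig_0,\sig_1\in\Sig[x]$ with $\hat f^{2\ell} + \sig_0 + g\sig_1 \in I(h)$. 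Now run the $\eps$-perturbation argument from the proof of Theorem~\ref{Zthm:cvg:lmd1}, with the preordering element $\sig_0+g\sig_1$ in place of the SOS remainder $q$ there: Lemma~2.1 of \cite{Ni13siam} yields a single order $k_2$ such that, for every $\eps>0$, $\hat f + \eps \in I_{2k_2}(h)+Q_{k_2}(g)$; adding $g\tau\in g\cdot\Sig[x]\subseteq Q_{k_2}(g)$ gives $f - (\mu_1-\eps) \in I_{2k_2}(h)+Q_{k_2}(g)$, so $\gm=\mu_1-\eps$ is feasible for \reff{ithrelaxsos} at order $k_2$. Letting $\eps\downarrow 0$ and using monotonicity and the sandwich $f_{i+1}^{2,k}\le f_{i+1}^{1,k}\le\mu_1$ yields \reff{lmd:i+1:fntcvg} for all $k\ge k_2$.

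Part (iii) is parallel to Theorem~\ref{Zthm:cvg:lmd1}(iii). At a minimizer $y^*$ of \reff{ithrelaxsdp} for which \reff{flat:Mt(y*)} holds, the truncation $y^*|_{2t}$ satisfies $M_t(y^*)\succeq 0$, $L_h^{(t)}(y^*)=0$ and $L_g^{(t)}(y^*)\succeq 0$, so by Theorem~1.1 of \cite{CF05} it admits an $r$-atomic representing measure, $r:=\rank M_t(y^*)$: there are distinct $u_1,\ldots,u_r\in S$ and $c_1,\ldots,c_r>0$ with $y^*|_{2t}=\sum_l c_l[u_l]_{2t}$, and $\langle 1,y^*\rangle=1$ gives $\sum_l c_l=1$. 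Each $u_l$ is a $\mt{Z}$-eigenvector with $f(u_l)\ge\lmd_i+\dt$, hence $f(u_l)\ge\omega(\lmd_i+\dt)$; since $[u_l]_{2k}$ is feasible for \reff{ithrelaxsdp} we also have $f_{i+1}^{1,k}\le f(u_l)$, and $f_{i+1}^{1,k}=\sum_l c_lf(u_l)$ then forces $f(u_l)=f_{i+1}^{1,k}$ for all $l$; combining with $f_{i+1}^{1,k}\le\omega(\lmd_i+\dt)$ gives $f_{i+1}^{1,k}=\omega(\lmd_i+\dt)$ and each $u_l$ a $\mt{Z}$-eigenvector for $\omega(\lmd_i+\dt)$, recoverable by the method of \cite{HL05}. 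For part (iv): by (ii) the hierarchy converges finitely to $\omega(\lmd_i+\dt)$, and the set of minimizers of \reff{itheigenpair} is precisely the set of $\mt{Z}$-eigenvectors associated with $\omega(\lmd_i+\dt)$, which is finite by hypothesis; Theorem~2.6 of \cite{Ni13mp} then implies that for all $k$ large and every minimizer $y^*$ of \reff{ithrelaxsdp}, the rank condition \reff{flat:Mt(y*)} holds for some $t\le k$.

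The only real obstacle is the finite-convergence step in part (ii): one must certify $f-(\mu_1-\eps)\ge 0$ on $S$ using the truncated \emph{module} $I_{2k}(h)+Q_k(g)$ rather than $I_{2k}(h)+\Sig[x]_{2k}$, and this is exactly what forces the two new devices above---the SOS multiplier $\tau$, chosen so that $f-\mu_1$ becomes a multiple of $g$ on $S$, and the preordering form of the Real Nullstellensatz. Once $\hat f$ has been produced, the rest of the argument (the $\eps$-perturbation, the monotonicity/sandwich bookkeeping, and the flat-extension and finite-minimizer arguments for (iii)--(iv)) transfers without change from Theorem~\ref{Zthm:cvg:lmd1}; one should only keep in mind the harmless degenerate case $\lmd_i+\dt=\mu_1$, where the $j=1$ term of $\tau$ is simply omitted since $g$ and $f-\mu_1$ both vanish on $\{f=\mu_1\}$.
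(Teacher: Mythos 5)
Your proposal is correct and follows essentially the same route as the paper: the same reduction to the proof of Theorem~\ref{Zthm:cvg:lmd1}, the Positivstellensatz/Real Nullstellensatz steps carried out for the archimedean module $I(h)+Q(f-\lmd_i-\dt)$, the $\eps$-perturbation via Lemma~2.1 of \cite{Ni13siam}, and the flat-truncation argument of \cite{CF05} and \cite{Ni13mp} for parts (iii)--(iv). The one place you diverge is unnecessary: for the finite-convergence certificate you build $g\tau$ with $\tau=\sum_j \tfrac{\mu_j-\mu_1}{\mu_j-\lmd_i-\dt}(b_j(f))^2$ (forcing you to sidestep the degenerate case $\mu_1=\lmd_i+\dt$), whereas the paper simply reuses the SOS polynomial $s=\sum_j(\nu_j-\nu_1)(b_j(f))^2$ from Theorem~\ref{Zthm:cvg:lmd1}(ii) verbatim, which already lies in $Q_k(g)$ since $\Sig[x]\subseteq Q(g)$, so no $g$-multiple and no case distinction are needed.
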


\begin{proof}
%
%
The proof is mostly same as for Theorem~\ref{Zthm:cvg:lmd1}.
In the following, we only list the differences.

(i) For the ``only if" direction: If $Z(\re,\A) \cap [\lmd_i+\dt, +\infty) = \emptyset$,
then \reff{itheigenpair} is infeasible.
By Positivstellensatz, $-1 \in I(h)  + Q(f-\lmd_i-\dt)$.
The resting proof is same as for Theorem~\ref{Zthm:cvg:lmd1}(i).

(ii) The ideal $I(h)$ is archmedean, and so is $I(h)+ Q(f-\lmd_i-\dt)$.
The asymptotic convergence
\reff{lmd:i+1:cvg:asmp} can be implied by Theorem~4.2 of \cite{L01}.
To prove the finite convergence \reff{lmd:i+1:fntcvg},
we follow the same proof as for Theorem~\ref{Zthm:cvg:lmd1}(ii).
Suppose $Z(\re, \A) \cap [\lmd_i+\dt, +\infty) = \{ \nu_1, \ldots, \nu_L \}$.
 Construct the polynomial $s$ same as there and let
\[
\hat{f} = f - \omega(\lmd_i+\dt) - s.
\]
Then $\hat{f}$ vanishes identically on the set
$\{x \in \re^n: \, h(x)=0, f(x) - \lmd_i - \dt \geq 0\}$.
By Real Nullstellensatz (cf.~\cite[Corollary~4.1.8]{BCR}),
there exist an integer $\ell>0$ and $q \in  Q(f - \lmd_i - \dt)$
such that $\hat{f}^{2\ell} + q \in  I(h)$.
The resting proof is same, except replacing $\Sig[x]$ by $Q(f - \lmd_i - \dt)$,
and $\Sig[x]_{2k}$ by $Q_k(f - \lmd_i - \dt)$.

(iii)-(iv)  The proof is same as for Theorem~\ref{Zthm:cvg:lmd1}(iii)-(iv).
\end{proof}

The convergence of semidefinite relaxations of \reff{ithrelaxsdp}-\reff{ithrelaxsos}
can be checked by the condition \reff{flat:Mt(y*)}.
When it is satisfied, the $\mt{Z}$-eigenvectors $u_1,\ldots,u_r$
can be computed by the method in \cite{HL05}.
Typically, we can get all $\mt{Z}$-eigenvectors
if primal-dual interior-point methods are used to solve the semidefinite programs.
We refer to Remark~\ref{rmk:Zeig:lmd1}.

Next, we show how to use $\omega(\lmd_i+\dt)$ to determine $\lmd_{i+1}$.
Assume that $\lmd_i$ is isolated, otherwise there are infinitely many
$\mt{Z}$-eigenvalues and it is impossible to get all of them.
If $\lmd_i$ is the biggest $\mt{Z}$-eigenvalue, then we stop;
otherwise, the next bigger one $\lmd_{i+1}$ exists.
For such case, if $\dt>0$ in \reff{itheigenpair} is small enough,
then $\omega(\lmd_i+\dt)$ as in \reff{omg:lmdi+dt}
equals $\lmd_{i+1}$. Consider the optimization problem
\be  \label{opt:chi:k}
\left\{ \baray{rl}
\nu_i := \max & f(x) \\
  s.t. &  h(x)=0, \,  f(x) \leq \lambda_i + \delta.
\earay \right.
\ee
The optimal value of \reff{opt:chi:k} is the biggest $\mt{Z}$-eigenvalue
of $\A$ that is smaller than or equal to $\lmd_i+\dt$, i.e.,
\[
\nu_i = \max\{\lmd \in \mt{Z}(\re,\A) : \, \lmd \leq \lmd_i + \dt \}.
\]
The next bigger $\mt{Z}$-eigenvalue $\lmd_{i+1}$ can be
determined by the following theorem.

\begin{theorem}  \label{thm:cmp:lmdi+1}
Let $\mathcal{A}\in\mathtt{T}^m(\re^n)$ and $\dt>0$.
Suppose $\lmd_i$ is a $\mt{Z}$-eigenvalue of $\A$
and $\lmd_{max}$ is the biggest one.
\bit

\item [(i)] Suppose $\lmd_i$ is an isolated $\mt{Z}$-eigenvalue of $\A$.
If $\nu_i = \lmd_i$ and $\lmd_{max} > \lmd_i$, then
$\omega(\lmd_i+\dt)=\lmd_{i+1}$, which is
the smallest $\mt{Z}$-eigenvalue bigger than $\lmd_i$.

\item [(ii)] If $\nu_i = \lmd_i$ and \reff{ithrelaxsdp} is infeasible for some $k$,
then $\lmd_i = \lmd_{max}$.

\eit
\end{theorem}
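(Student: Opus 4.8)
The plan is to exploit the discreteness of the isolated eigenvalue $\lmd_i$ together with the definitions of $\nu_i$ and $\omega(\lmd_i+\dt)$ established above. For part (i), suppose $\nu_i = \lmd_i$ and $\lmd_{max} > \lmd_i$. Since $\nu_i = \lmd_i$, there is no real $\mt{Z}$-eigenvalue of $\A$ in the half-open interval $(\lmd_i, \lmd_i+\dt]$; indeed, any such eigenvalue $\mu$ would satisfy $\lmd_i < \mu \le \lmd_i+\dt$ and hence would be a feasible objective value for \reff{opt:chi:k} strictly larger than $\lmd_i$, contradicting $\nu_i = \lmd_i$. Because $\lmd_{max} > \lmd_i$, the next bigger $\mt{Z}$-eigenvalue $\lmd_{i+1}$ exists and satisfies $\lmd_{i+1} > \lmd_i + \dt$ by the previous sentence. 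Now $\omega(\lmd_i+\dt) = \min\{\lmd \in Z(\re,\A): \lmd \ge \lmd_i+\dt\}$ by \reff{omg:lmdi+dt}; since no eigenvalue lies in $[\lmd_i+\dt,\lmd_{i+1})$ (anything $\ge \lmd_i+\dt$ and $< \lmd_{i+1}$ would contradict the definition of $\lmd_{i+1}$ as the successor of $\lmd_i$, using that $\lmd_i+\dt > \lmd_i$), the minimum is attained at $\lmd_{i+1}$, so $\omega(\lmd_i+\dt) = \lmd_{i+1}$. The role of "$\lmd_i$ isolated" is to guarantee that "the next bigger one $\lmd_{i+1}$" is well defined, i.e.\ that $Z(\re,\A)$ is order-isomorphic to an initial segment near $\lmd_i$ rather than accumulating there.

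For part (ii), suppose $\nu_i = \lmd_i$ and that \reff{ithrelaxsdp} is infeasible for some order $k$. By Theorem~\ref{Zcvg:lmd:i+1}(i), infeasibility of \reff{ithrelaxsdp} for some $k$ is equivalent to $Z(\re,\A) \cap [\lmd_i+\dt,+\infty) = \emptyset$. Combined with $\nu_i = \lmd_i$, which (as argued above) says there is no eigenvalue in $(\lmd_i,\lmd_i+\dt]$, we conclude that $\A$ has no real $\mt{Z}$-eigenvalue strictly greater than $\lmd_i$. Hence $\lmd_i = \lmd_{max}$.

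The argument is essentially bookkeeping with the three quantities $\nu_i$, $\omega(\lmd_i+\dt)$, and $\lmd_{i+1}$, and the only genuine input beyond their definitions is Theorem~\ref{Zcvg:lmd:i+1}(i), which is cited as already proved. The one point that deserves care — and which I expect to be the main (minor) obstacle — is making the case split around the interval endpoints airtight: one must track whether the intervals are open or half-open at $\lmd_i+\dt$ and at $\lmd_i$, so that "$\nu_i=\lmd_i$ rules out eigenvalues in $(\lmd_i,\lmd_i+\dt]$" and "$\omega(\lmd_i+\dt)\ge\lmd_i+\dt$" fit together without a gap. I would also note explicitly that $\dt>0$ is not required to be "small" for this theorem; the conclusion $\omega(\lmd_i+\dt)=\lmd_{i+1}$ holds for any $\dt>0$ once the hypothesis $\nu_i=\lmd_i$ is in force, since that hypothesis already encodes the smallness condition needed in the preceding discussion.
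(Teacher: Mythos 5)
Your proposal is correct and follows essentially the same line of reasoning as the paper: use $\nu_i=\lmd_i$ to rule out eigenvalues in $(\lmd_i,\lmd_i+\dt]$, deduce $\lmd_{i+1}>\lmd_i+\dt$ when $\lmd_{max}>\lmd_i$, and then read off $\omega(\lmd_i+\dt)=\lmd_{i+1}$ directly from \reff{omg:lmdi+dt}; part (ii) is the same combination of Theorem~\ref{Zcvg:lmd:i+1}(i) with $\nu_i=\lmd_i$. Your side remarks are also apt: the isolation hypothesis is essentially subsumed by $\nu_i=\lmd_i$ in this statement, and your careful endpoint bookkeeping matches the paper's intended conclusion (the paper's final line of part (i) reads ``$\omega(\lmd_i+\dt)=\lmd_i+\dt$'', evidently a typo for ``$=\lmd_{i+1}$'').
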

\begin{proof}
(i) We have seen that $\nu_i = \lmd_i $ is the biggest $\mt{Z}$-eigenvalue
of $\A$ that is smaller than or equal to $\lmd_i+\dt$.
Since $\lmd_i = \nu_i$ is isolated and $\lmd_{max} > \lmd_i$,
the smallest $\mt{Z}$-eigenvalue bigger than $\lmd_i$ is
$\lmd_{i+1}$, and $\lmd_{i+1} > \lmd_i + \dt$.
By \reff{omg:lmdi+dt}, $\omega(\lmd_i+\dt)$ is the smallest
$\mt{Z}$-eigenvalue that is greater than or equal to $\lmd_i+\dt$.
There are no $\mt{Z}$-eigenvalues in the open interval $(\lmd_i, \lmd_{i+1})$.
So, $\omega(\lmd_i+\dt) = \lmd_i+\dt$.

(ii) When \reff{ithrelaxsdp} is infeasible for some $k$,
by Theorem~\ref{Zcvg:lmd:i+1}(i), all the real
$\mt{Z}$-eigenvalues are smaller than $\lmd_i + \dt$.
Note that $\nu_i$ is the biggest $\mt{Z}$-eigenvalue that
is smaller than or equal to $\lmd_i+\dt$.
If $\lmd_i = \nu_i$, then $\lmd_i$ must be the
biggest $\mt{Z}$-eigenvalue, i.e.,
$\lmd_i = \lmd_{max}$.
\end{proof}

The problem \reff{opt:chi:k} is a polynomial optimization.
Its optimal value $\nu_i$ can also be computed by solving Lasserre type
semidefinite relaxations that are similar to \reff{ithrelaxsdp}-\reff{ithrelaxsos}.
When $\lmd_i$ is isolated, for $\dt>0$ sufficiently small, we must have
$\nu_i = \lmd_i$, no matter if $\lmd_{i+1}$ exists or not.
This is because $\nu_i$ is the smallest $\mt{Z}$-eigenvalue
greater than or equal to $\lmd_i+\dt$.
Lemma~\ref{isolated} can be used to verify that $\lmd_i$ is isolated.

\subsection{An algorithm for computing all real $\mathtt{Z}$-eigenvalues}
\label{algozeig}

Assume all the real $\mt{Z}$-eigenvalues of the tensor $\A$ are isolated.
We compute all of them, from the smallest to the biggest.
First,  we compute $\lmd_1$ if it exists,
by solving \reff{relaxsdp}-\reff{relaxsos}.
After getting $\lmd_1$,  we solve the hierarchy of
\reff{ithrelaxsdp}-\reff{ithrelaxsos} and then determine $\lmd_2$.
If $\lmd_2$ does not exist, we stop;
otherwise, we then determine $\lmd_3$.
Repeating this procedure, we can get all the real $\mt{Z}$-eigenvalues.

The following algorithm can be applied to get $\mt{Z}$-eigenvalues.

\begin{algorithm}\label{algzeig}
Compute real $\mathtt{Z}$-eigenvalues of a tensor $\A \in \mt{T}^m(\re^n)$.

\bit

\item [Step~0:] Choose a small positive value for $\dt$ (e.g., $0.05$).

\item [Step~1:]  If (\ref{relaxsdp}) is infeasible for some order $k$, then $\A$ has no
real $\mathtt{Z}$-eigenvalues and stop.
Otherwise, solve it to get the smallest $\mathtt{Z}$-eigenvalue $\lambda_1$.
Let $i=1$.

\item [Step~2:]   Solve \reff{opt:chi:k} for $\nu_i$.
If $\nu_i=\lambda_{i}$, go to Step~3.
Otherwise, reduce the value of $\dt$
(e.g., let $\dt := \dt/5$) and compute $\nu_i$.
Repeat until we get $\nu_i=\lambda_{i}$.

\item [Step~3:] If (\ref{ithrelaxsdp}) is infeasible for some order $k$,
the largest $\mathtt{Z}$-eigenvalue is
$\lambda_{i}$ and stop. Otherwise, solve it for $\omega(\dt_i+\dt)$.
Let $\lmd_{i+1} := \omega(\dt_i+\dt)$, $i:=i+1$ and go to Step 2.

\eit
\end{algorithm}

For a generic tensor $\A$, it has finitely many $\mt{Z}$-eigenvalues,
and all of them are isolated. So,
Algorithm~\ref{algzeig} terminates after a finite number of steps,
for almost all nonsymmetric tensors in $\mt{T}^m(\re^n)$.

\section{Computing all H-eigenvalues}
\label{sec4:Heig}
\setcounter{equation}{0}

In this section, we compute all $\mathtt{H}$-eigenvalues for nonsymmetric tensors.
Unlike $\mt{Z}$-eigenvalues, the number of $\mathtt{H}$-eigenvalues
is always finite.

\begin{prop} \label{number:Heig}
Every tensor $\A \in \mt{T}^m(\cpx^n)$ has $n(m-1)^{n-1}$ complex $\mathtt{H}$-eigenvalues,
including their multiplicities.
\end{prop}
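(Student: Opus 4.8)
The plan is to realize the $\mt{H}$-eigenvalues of $\A$ as the roots of a single univariate polynomial in $\lmd$ --- the ``characteristic polynomial'' of $\A$ --- and then to compute the degree of that polynomial. Set $F_j(u,\lmd) := (\A u^{m-1})_j - \lmd u_j^{m-1}$ for $j=1,\ldots,n$, so that $(\lmd,u)$ with $u \ne 0$ is an $\mt{H}$-eigenpair of $\A$ exactly when $F_1(u,\lmd)=\cdots=F_n(u,\lmd)=0$. Each $F_j$ is homogeneous of degree $m-1$ in $u=(u_1,\ldots,u_n)$, so $u$ may be regarded as a point of the projective space $\P^{n-1}$, and for each fixed $\lmd$ the polynomials $F_j(\cdot,\lmd)$ are $n$ forms of degree $m-1$ in $n$ homogeneous variables. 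I would then form their multivariate (Macaulay) resultant $\phi_\A(\lmd) := \mathrm{Res}\big(F_1(\cdot,\lmd),\ldots,F_n(\cdot,\lmd)\big)$. By the defining property of the resultant, $\phi_\A(\lmd_0)=0$ if and only if the forms $F_j(\cdot,\lmd_0)$ have a common zero in $\P^{n-1}$, i.e. if and only if $\lmd_0$ is an $\mt{H}$-eigenvalue of $\A$; and the multiplicity of an $\mt{H}$-eigenvalue is by definition its multiplicity as a root of $\phi_\A$. Thus it suffices to show that $\phi_\A$ is a nonzero polynomial in $\lmd$ of degree exactly $n(m-1)^{n-1}$.

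The degree computation is the crux, and I would obtain it from the multihomogeneity of the resultant. For $n$ forms of degree $d:=m-1$ in $n$ variables, the resultant is a polynomial in the coefficients of the forms that is homogeneous of degree $d^{\,n-1}$ in the coefficient vector of each single form. In $F_j(u,\lmd) = (\A u^{m-1})_j - \lmd u_j^{m-1}$ the parameter $\lmd$ appears affine-linearly and only in the one coefficient attached to the monomial $u_j^{m-1}$; hence substituting these coefficient vectors into the resultant yields a polynomial in $\lmd$ of degree at most $n\cdot d^{\,n-1} = n(m-1)^{n-1}$. To see that this bound is attained --- and, crucially, that it is attained for \emph{every} tensor $\A$, not just generic ones --- I would isolate the coefficient of $\lmd^{\,n d^{n-1}}$: by multihomogeneity it equals, up to sign, $\mathrm{Res}(u_1^{m-1},\ldots,u_n^{m-1})$, the resultant of the pure powers, which is nonzero (in fact $1$, by Macaulay's formula, since $u_1^{m-1},\ldots,u_n^{m-1}$ have no common zero in $\P^{n-1}$). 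Therefore $\phi_\A \not\equiv 0$ and $\deg_\lmd \phi_\A = n(m-1)^{n-1}$, so $\A$ has exactly $n(m-1)^{n-1}$ complex $\mt{H}$-eigenvalues counted with multiplicity.

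The steps I would not belabor are standard facts of elimination theory: the exact homogeneity degree of the resultant in each argument, its vanishing criterion, and the evaluation $\mathrm{Res}(u_1^{m-1},\ldots,u_n^{m-1})=1$. The one genuinely delicate point is the last one above --- guaranteeing that the leading coefficient of $\phi_\A$ in $\lmd$ does not degenerate for special tensors --- and it is precisely handled by identifying that coefficient with a tensor-independent resultant of pure powers. As consistency checks I would note that for $m=2$ the count is $n$, recovering that an $n\times n$ matrix has $n$ eigenvalues with multiplicity, and that (for generic $\A$, via the same degree count or multihomogeneous B\'ezout on $\P^{n-1}\times\P^1$) all $n(m-1)^{n-1}$ roots are simple, in agreement with the finiteness of the $\mt{H}$-eigenvalue set used elsewhere in the paper.
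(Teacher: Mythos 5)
Your argument is correct and is essentially the same as the paper's: both reduce to the Macaulay resultant of the system $\A u^{m-1}-\lmd u^{[m-1]}$ in the $n$ homogeneous variables $u$, use its homogeneity of degree $(m-1)^{n-1}$ in each form's coefficients to bound the $\lmd$-degree by $n(m-1)^{n-1}$, and then identify the top-degree coefficient with the (nonzero) resultant of the pure powers $u_1^{m-1},\ldots,u_n^{m-1}$ --- which the paper phrases as $\mathrm{Res}(-\mathcal{I}x^{m-1})\ne 0$ for the identity tensor $\mathcal{I}$. The only cosmetic difference is that you invoke multihomogeneity and Macaulay's formula explicitly, while the paper states the degree and nonvanishing of the leading coefficient more tersely.
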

\begin{proof}
Let $\mathcal{I}\in\mathtt{T}^m(\mathbb{R}^n)$ be the identity tensor
whose only non-zero entries are
$\mathcal{I}_{ii\cdots i}=1$, with $i=1,2,\cdots, n$.
Recall that $\lmd$ is an $\mathtt{H}$-eigenvalue if and only if
there exists $0 \ne u \in \cpx^n$ such that
$\mathcal{A}u^{m-1}=\lambda u^{[m-1]}$,
that is,  $(\mathcal{A}-\lambda \mathcal{I}) u^{m-1}=0$.
By the definition of resultant (cf.~\cite{Stu02}), which we denote by $Res$,
$\lmd$ is an $\mathtt{H}$-eigenvalue if and only if
\be \label{Res:A-lmdI=0}
Res\big( (\mathcal{A}- \lambda \mathcal{I}) x^{m-1} \big) = 0.
\ee
The resultant $Res\big( (\mathcal{A}- \lambda \mathcal{I}) x^{m-1} \big)$
is homogeneous in the entries of $\A$ and $\lmd$.
It has degree $D := n(m-1)^{n-1}$. We can expand it as
\[
Res\big( (\A- \lambda \mathcal{I}) x^{m-1} \big)=p_0(\mathcal{A})  + p_1(\A) \lmd
+ \cdots + p_D(\A) \lmd^D.
\]
By the homogeneity of $Res$,
$p_D(\A) = Res\big( - \mathcal{I} x^{m-1} \big) \ne 0$,
because the homogeneous polynomial system $ -\mathcal{I} x^{m-1} =0$
has no nonzero complex solutions. This means that
$Res\big( (\mathcal{A}- \lambda \mathcal{I}) x^{m-1} \big)$
is not constantly zero, and its degree is $D$.
Therefore, \reff{Res:A-lmdI=0} has $D$ complex roots including multiplicities.
Hence, $\A$ has $D$ complex $\mt{H}$-eigenvalues.
\end{proof}

Recall that $(\lmd, u)$ is a real $\mt{H}$-eigenpair
of a tensor $\A \in \mt{T}^m(\re^n)$ if and only if
$0 \ne u \in \re^n$ and $ \A u^{m-1} = \lmd u^{[m-1]}$.
Let $m_0$ be the biggest even number not bigger than $m$, i.e.,
\[
m_0 = 2 \lceil (m-1)/2 \rceil.
\]
Note that $m-1 \leq m_0 \leq m$. We can normalize such $u$ as
\be \label{u^m==1}
(u_1)^{m_0} + \cdots + (u_n)^{m_0} = 1.
\ee
Under this normalization, the $\mt{H}$-eigenvalue $\lmd$ can be given as
\[
\lmd = \lmd (u^{[m_0-m+1]})^T u^{[m-1]} =  (u^{[m_0-m+1]})^T \A u^{m-1}.
\]
Let $h$ be the polynomial tuple
\be \label{df:h:Heig}
h := \Big( \A x^{m-1} - \big( (x^{[m_0-m+1]})^T \A x^{m-1} \big) x^{[m-1]},
\sum_{i=1}^n(x_i)^{m_0} - 1 \Big).
\ee
Then, $u$ is an $\mt{H}$-eigenvector normalized as in \reff{u^m==1}
if and only if $h(u) = 0$.
Since $\A$ has finitely many $\mt{H}$-eigenvalues,
we can order its real ones monotonically as
\[
\mu_1 < \mu_2 < \cdots < \mu_N,
\]
if at least one real $\mt{H}$-eigenvalue exists.
We call $\mu_i$ the $i$th smallest $\mt{H}$-eigenvalue.

\subsection{The smallest $\mathtt{H}$-eigenvalue}

In this subsection, we show how to determine $\mu_1$.
Let $h$ be the polynomial tuple as in \reff{df:h:Heig},
then $\mu_1$ equals the optimal value of the optimization problem
\begin{equation}
\label{opt:H:mu1}
\left\{ \begin{array}{rl}
\min & f(x):= (x^{[m_0-m+1]})^T \A x^{m-1}  \\
{\mbox s.t.}& h(x) = 0.
\end{array} \right.
\end{equation}
Lasserre's hierarchy \cite{L01} of semidefinite relaxations for solving
\reff{opt:H:mu1} is
\begin{equation}
 \label{Heig:mom:mu1}
\left\{ \begin{array}{rl}
  \rho_1^{1,k} :=  \min& \langle  f,z  \rangle\\
 {\mbox s.t.}&  \langle 1, z \rangle = 1, \, L_{h}^{(k)} (z)= 0 , \\
 &  M_k(z)\succeq 0, \, z \in \re^{ \N_{2k}^n },
 \end{array} \right.
 \end{equation}
for the orders $k=k_0, k_0+1, \ldots, $ where
\be \label{Heig:k0}
k_0 := \lceil (m_0+m-1)/2 \rceil.
\ee
The dual optimization problem of \reff{Heig:mom:mu1} is
 \begin{equation} \label{Heig:sos:mu1}
\left\{ \begin{array}{rl}
  \rho_1^{2,k} :=  \max& \gamma \\
 {\mbox s.t.}& f - \gamma \in  I_{2k}(h) +\Sig[x]_{2k}.
  \end{array} \right.
\end{equation}
As can be shown in \cite{L01},
$\rho_1^{2,k}  \leq  \rho_1^{1,k} \leq \mu_1$
for all $k$, and the sequences $\{ \rho_1^{1,k} \}$
and $\{ \rho_1^{2,k} \}$ are monotonically increasing.

\begin{theorem}\label{thm:Heig:mu1}
Let $\mathcal{A}\in\mathtt{T}^m(\re^n)$
and $H(\re, \A)$ be the set of its real $\mt{H}$-eigenvalues.
Then we have:

\bit

\item [(i)] The set $H(\re, \A) = \emptyset$ if and only if
the semidefinite relaxation \reff{Heig:mom:mu1} is infeasible
for some order $k$.

\item [(ii)] If $H(\re, \A) \ne \emptyset$,
then for all $k$ sufficiently large
\be \label{mu1:ficvg:Heig}
\rho_1^{1,k}   =  \rho_1^{2,k}  =   \mu_1.
\ee

\item [(iii)] Let $k_0$ be as in \reff{Heig:k0}.
Suppose $z^*$ is a minimizer of \reff{Heig:mom:mu1}.
If there exists an integer $t\leq k$  such that
\be \label{flat:Mt(z*)}
{\mbox rank} \, M_{t-k_0}(z^*)={\mbox rank}\, M_t(z^*),
\ee
then $\rho_1^{1,k}  = \mu_1$ and
there are $r:=\rank M_t(z^*)$ distinct real $\mt{H}$-eigenvectors $u_1,\ldots,u_r$
associated with $\mu_1$ and normalized as in \reff{u^m==1}.

\item [(iv)] Suppose $\A$ has finitely many $\mt{H}$-eigenvectors associated to $\mu_1$.
Then, for all $k$ big enough and for every minimizer $z^*$ of (\ref{Heig:mom:mu1}),
there exists an integer $t\leq k$ satisfying \reff{flat:Mt(z*)}.

\eit

\end{theorem}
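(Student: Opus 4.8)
The plan is to mimic the proof of Theorem~\ref{Zthm:cvg:lmd1} item by item, replacing the tuple $h$ of \reff{df:h:Zeig} by the tuple $h$ of \reff{df:h:Heig}, and to exploit that — unlike for $\mt{Z}$-eigenvalues — Proposition~\ref{number:Heig} guarantees $H(\re,\A)$ is \emph{always} a finite set. Two observations make the translation go through: since $m_0$ is even and positive, every nonzero real $\mt{H}$-eigenvector can be rescaled to satisfy \reff{u^m==1} (rescaling $u$ preserves $\mt{H}$-eigenpairs), so \reff{opt:H:mu1} does have optimal value $\mu_1$; and since $\sum_{i=1}^n (x_i)^{m_0}-1$ is an entry of $h$, we have $-(\sum_i x_i^{m_0}-1)^2 \in I(h)$ while $\{x : \sum_i x_i^{m_0}=1\}$ is compact, so $I(h)$ is archimedean.

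For (i), the ``if'' direction is immediate, since a normalized real $\mt{H}$-eigenvector $u$ gives a feasible point $[u]_{2k}$ of \reff{Heig:mom:mu1}; for ``only if'', $H(\re,\A)=\emptyset$ forces $h(x)=0$ to have no real solution, so Positivstellensatz gives $-1\in I(h)+\Sig[x]$, hence $-1\in I_{2k}(h)+\Sig[x]_{2k}$ for $k$ large, so \reff{Heig:sos:mu1} is unbounded above and \reff{Heig:mom:mu1} is infeasible by weak duality. For (ii), archimedeanness gives the asymptotic convergence $\rho_1^{1,k},\rho_1^{2,k}\to\mu_1$ by Theorem~4.2 of \cite{L01}; for finite convergence, write $H(\re,\A)=\{\mu_1<\cdots<\mu_N\}$, choose univariate interpolants $b_i$ with $b_i(\mu_j)=\delta_{ij}$, set $s:=\sum_i(\mu_i-\mu_1)(b_i(f(x)))^2\in\Sig[x]_{2k_1}$, and note $\hat f:=f-\mu_1-s$ vanishes on $\mc{V}_\re(h)$. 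By the Real Nullstellensatz (\cite[Corollary~4.1.8]{BCR}) there are $\ell>0$ and $q\in\Sig[x]$ with $\hat{f}^{2\ell}+q\in I(h)$, and feeding this into the $\eps$-perturbation identity from the proof of Theorem~\ref{Zthm:cvg:lmd1}(ii) (using Lemma~2.1 of \cite{Ni13siam}) produces, for a fixed $k_2$ and every $\eps>0$, a representation $f-(\mu_1-\eps)\in I_{2k_2}(h)+\Sig[x]_{2k_2}$; hence $\rho_1^{2,k_2}\ge\mu_1$, and with $\rho_1^{2,k}\le\rho_1^{1,k}\le\mu_1$ and monotonicity we get \reff{mu1:ficvg:Heig} for all $k\ge k_2$.

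For (iii), since $t\le k$ we have $M_t(z^*)\succeq 0$ and $L_h^{(t)}(z^*)=0$; under \reff{flat:Mt(z*)}, Theorem~1.1 of \cite{CF05} writes $z^*|_{2t}=\sum_{i=1}^r c_i[u_i]_{2t}$ with $r=\rank M_t(z^*)$, distinct $u_i\in\mc{V}_\re(h)$, and $c_i>0$, while $\langle 1,z^*\rangle=1$ forces $\sum_i c_i=1$. Each $u_i$ is a real $\mt{H}$-eigenvector normalized as in \reff{u^m==1}, with $\mt{H}$-eigenvalue $f(u_i)\ge\mu_1$, and each $[u_i]_{2k}$ is feasible so $f(u_i)\ge\rho_1^{1,k}$; combined with $\rho_1^{1,k}=\sum_i c_i f(u_i)\le\mu_1$ this forces $f(u_i)=\rho_1^{1,k}=\mu_1$ for all $i$. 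For (iv), finiteness of $H(\re,\A)$ together with (ii) gives finite convergence of $\{\rho_1^{1,k}\}$, and ``$\mu_1$ has finitely many $\mt{H}$-eigenvectors'' means \reff{opt:H:mu1} has finitely many minimizers, so Theorem~2.6 of \cite{Ni13mp} forces \reff{flat:Mt(z*)} to hold for some $t\le k$ once $k$ is large enough.

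I do not anticipate a genuine obstacle: the statement is essentially a transcription of Theorem~\ref{Zthm:cvg:lmd1}, with the pleasant simplification that no separate finiteness hypothesis on the eigenvalue set is needed in (ii) or (iv). The one step worth stating carefully is that $h$ of \reff{df:h:Heig} cuts out exactly the real $\mt{H}$-eigenvectors normalized by \reff{u^m==1} and that $I(h)$ is archimedean, both of which come down to $m_0$ being even and positive.
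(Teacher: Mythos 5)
Your proposal is correct and matches the paper's own treatment: the paper proves this theorem by explicitly deferring to the argument for Theorem~\ref{Zthm:cvg:lmd1}, noting only that $H(\re,\A)$ is always finite by Proposition~\ref{number:Heig}. Your two extra remarks — that every nonzero real $\mt{H}$-eigenvector can be normalized as in \reff{u^m==1} because $m_0$ is even and positive, and that $I(h)$ is archimedean because $\sum_i x_i^{m_0}-1$ belongs to $h$ and cuts out a compact set — are exactly the details the paper leaves implicit, and they are verified correctly.
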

\begin{proof}
It can be proved in the same way as for Theorem~\ref{Zthm:cvg:lmd1}.
The only difference is that $H(\re, \A)$ is always a finite set,
by Proposition~\ref{number:Heig}. For cleanness of the paper,
we omit the proof here.
\end{proof}

The rank condition \reff{flat:Mt(z*)} is a criterion
for checking the convergence of Lasserre's hierarchy
of \reff{Heig:mom:mu1} and \reff{Heig:sos:mu1}.
When it is satisfied, the
$\mt{H}$-eigenvectors $u_1,\ldots,u_r$
can be computed by the method in \cite{HL05}.
Typically, we can get all $\mt{H}$-eigenvectors
if primal-dual interior-point methods are used to solve the semidefinite relaxations.
We refer to Remark~\ref{rmk:Zeig:lmd1}.

 \subsection{Bigger $\mathtt{H}$-eigenvalues}\label{ithheigenvalue}

Suppose the $i$th smallest real $\mt{H}$-eigenvalue $\mu_i$ exists and is known.
We want to determine whether the next bigger one $\mu_{i+1}$ exists or not.
If it exists, we show how to compute it.

Let $\dt >0$ be a small number. Consider the optimization problem
\be \label{minf:Hmu:i+1}
\left\{ \begin{array}{rl}
\min & f(x)\\
{\mbox s.t.}& h(x) = 0, \, f(x)\geq \mu_i+\delta,
\end{array} \right.
\ee
where $f,h$ are same as in \reff{opt:H:mu1}.
The optimal value of \reff{minf:Hmu:i+1} is the smallest
$\mt{H}$-eigenvalue of $\A$ that is greater than or equal to $\mu_i+\dt$. Denote
\be \label{Hvpi:>mui+dt}
\varpi(\mu_i+ \dt)  := \min \{ \mu \in H(\re, \A): \, \mu \geq \mu_i + \dt\}.
\ee
Lasserre's hierarchy of semidefinite relaxations for solving \reff{minf:Hmu:i+1} is
\be \label{egH:mom:mui+1}
\left\{ \begin{array}{rl}
 \rho_{i+1}^{1,k} :=  \min& \langle f,z\rangle\\
 {\mbox s.t.}& \langle 1, z \rangle = 1,  L_h^{(k)}(z) = 0, \\
 & M_k(z)\succeq 0, \, L_{f-\mu_i-\dt}^{(k)}(z)\succeq 0, \, z \in \re^{ \N_{2k}^n },
 \end{array} \right.
\ee
for the orders $k = k_0, k_0+1, \ldots$.
The dual problem of \reff{egH:mom:mui+1} is
\be \label{egH:sos:mui+1}
\left\{ \begin{array}{rl}
 \rho_{i+1}^{2,k} := \max  & \gamma \\
  {\mbox s.t.}&  f -\gamma  \in  I_{2k}(h) + +Q_k(f-\mu_i-\dt).
  \end{array} \right.
\ee
The properties of relaxations \reff{egH:mom:mui+1}-\reff{egH:sos:mui+1}
are as follows.

\begin{theorem}\label{Hcvg:mu:i+1}
Let $\mathcal{A}\in\mathtt{T}^m(\re^n)$ and $H(\re, \A)$
be the set of its real $\mt{H}$-eigenvalues.
Assume that $\mu_i \in H(\re, \A)$. Then we have:

\bit

\item [(i)] The intersection $H(\re, \A) \cap [\mu_i+\dt, +\infty) = \emptyset$ if and only if
the semidefinite relaxation \reff{egH:mom:mui+1} is infeasible
for some order $k$.

\item [(ii)] If $H(\re, \A) \cap [\mu_i+\dt, +\infty) \ne \emptyset$,
then for all $k$ sufficiently large
\be \label{Hmu:i+1:fntcg}
\rho_{i+1}^{2,k}   =     \rho_{i+1}^{1,k}  = \varpi(\mu_i+\dt).
\ee

\item [(iii)] Let $z^*$ be a minimizer of (\ref{egH:mom:mui+1}).
If \reff{flat:Mt(z*)} is satisfied for some $t\leq k$, then
there exists $r:=\rank M_t(z^*)$ $\mt{H}$-eigenvectors $u_1,\ldots,u_r$
that are normalized as in \reff{u^m==1} and
are associated with $\varpi(\mu_i+\dt)$.

\item [(iv)]  Suppose $\A$ has finitely many $\mt{H}$-eigenvectors that are
associated with $\varpi(\mu_i+\dt)$ and are normalized as in \reff{u^m==1}.
Then, for all $k$ big enough and for all minimizer $z^*$ of (\ref{egH:mom:mui+1}),
there exists $t\leq k$ satisfying \reff{flat:Mt(z*)}.

\eit

\end{theorem}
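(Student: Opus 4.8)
The plan is to imitate the proof of Theorem~\ref{Zthm:cvg:lmd1}, with the quadratic module $Q(f-\mu_i-\dt)$ playing the role that $\Sig[x]$ played before, and to exploit the key structural fact that $H(\re,\A)$ is \emph{always} finite (Proposition~\ref{number:Heig}), so that the distinction between asymptotic and finite convergence collapses: the convergence in (ii) is automatically finite. For part~(i), the ``if'' direction is immediate, since any real $\mt{H}$-eigenpair $(\mu,u)$ with $\mu\ge\mu_i+\dt$, normalized as in \reff{u^m==1}, yields a feasible point $[u]_{2k}$ for \reff{egH:mom:mui+1} for every $k$. For the ``only if'' direction, if $H(\re,\A)\cap[\mu_i+\dt,+\infty)=\emptyset$ then the system $\{h(x)=0,\ f(x)-\mu_i-\dt\ge 0\}$ has no real solution, so by Positivstellensatz $-1\in I(h)+Q(f-\mu_i-\dt)$; truncating at a large enough order makes \reff{egH:sos:mui+1} unbounded above, and weak duality forces \reff{egH:mom:mui+1} to be infeasible for all large $k$.

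For part~(ii), I would first note that $\sum_i x_i^{m_0}-1$ is a component of the tuple $h$, so $-(\sum_i x_i^{m_0}-1)^2\in I(h)$ cuts out a compact set and $I(h)$ is archimedean; hence so is $I(h)+Q(f-\mu_i-\dt)$. Asymptotic convergence then follows from Theorem~4.2 of \cite{L01}. To upgrade it to finite convergence, I would copy the Real-Nullstellensatz argument of Theorem~\ref{Zthm:cvg:lmd1}(ii) verbatim: writing $H(\re,\A)\cap[\mu_i+\dt,+\infty)=\{\nu_1,\dots,\nu_L\}$, build the interpolation-based SOS polynomial $s=\sum_j (\nu_j-\varpi(\mu_i+\dt))(b_j(f(x)))^2$, set $\hat f=f-\varpi(\mu_i+\dt)-s$, observe $\hat f$ vanishes on $\{h=0,\ f-\mu_i-\dt\ge0\}$, apply \cite[Corollary~4.1.8]{BCR} to get $\hat f^{2\ell}+q\in I(h)$ with $q\in Q(f-\mu_i-\dt)$, and finish with the $\eps$-perturbation identity and Lemma~2.1 of \cite{Ni13siam}, everywhere replacing $\Sig[x]$ by $Q(f-\mu_i-\dt)$ and $\Sig[x]_{2k}$ by $Q_k(f-\mu_i-\dt)$. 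This shows $\gamma=\varpi(\mu_i+\dt)-\eps$ is feasible in \reff{egH:sos:mui+1} for a fixed order and all $\eps>0$, and monotonicity plus the sandwich $\rho_{i+1}^{2,k}\le\rho_{i+1}^{1,k}\le\varpi(\mu_i+\dt)$ closes it.

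For parts~(iii)--(iv), the flat-truncation machinery transfers with no change. Given a minimizer $z^*$ of \reff{egH:mom:mui+1} satisfying \reff{flat:Mt(z*)}, Theorem~1.1 of \cite{CF05} produces an atomic representing measure supported on $r=\rank M_t(z^*)$ points $u_1,\dots,u_r\in\mc V_\re(h)$ with positive weights summing to $1$ (from $\langle 1,z^*\rangle=1$); each $u_j$ is a feasible point of \reff{minf:Hmu:i+1}, hence an $\mt{H}$-eigenvector normalized as in \reff{u^m==1} with $f(u_j)\ge\varpi(\mu_i+\dt)$, while $\rho_{i+1}^{1,k}=\sum c_j f(u_j)\le\varpi(\mu_i+\dt)$ forces $f(u_j)=\varpi(\mu_i+\dt)$ for all $j$; this gives (iii). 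Part~(iv) follows from Theorem~2.6 of \cite{Ni13mp}: finite convergence (from (ii)) together with finitely many minimizers of \reff{minf:Hmu:i+1} forces \reff{flat:Mt(z*)} at all large orders. The only place demanding genuine care---the ``main obstacle''---is checking that the archimedean property and the Positivstellensatz/Real-Nullstellensatz steps still go through once $f$ is not a norm-like objective and $h$ uses the degree-$m_0$ normalization $\sum_i x_i^{m_0}=1$; but since that normalization polynomial lies in $h$, compactness and archimedeanness are preserved exactly as in the $\mt{Z}$-eigenvalue case, so no new difficulty actually arises.
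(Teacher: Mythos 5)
Your proposal is correct and follows exactly the route the paper intends: the paper proves Theorem~\ref{Hcvg:mu:i+1} by a one-line reference to Theorem~\ref{Zcvg:lmd:i+1} (and ultimately Theorem~\ref{Zthm:cvg:lmd1}), noting only that $H(\re,\A)$ is automatically finite by Proposition~\ref{number:Heig}, and you have expanded precisely those referenced steps---Positivstellensatz for (i), archimedeanness plus the interpolation/Real-Nullstellensatz/$\eps$-perturbation chain with $Q(f-\mu_i-\dt)$ replacing $\Sig[x]$ for (ii), Curto--Fialkow flat truncation for (iii), and Theorem~2.6 of \cite{Ni13mp} for (iv)---in the same order and with the same ingredients. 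No genuinely different idea or decomposition appears; this is the paper's argument written out in full.
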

\begin{proof}
It can proved in the same way as for Theorem~\ref{Zcvg:lmd:i+1}.
Note that $H(\re,\A)$ is always a finite set, by Proposition~\ref{number:Heig}.
\end{proof}

In the following, we show how to use $\varpi(\mu_i+\dt)$
to determine $\mu_{i+1}$. Consider the maximization problem
\be  \label{max:H:nui}
\left\{ \baray{rl}
\upsilon_i := \max & f(x) \\
  s.t. &  h(x)=0, \,  f(x) \leq \mu_i + \delta.
\earay \right.
\ee
The optimal value of \reff{max:H:nui} is the biggest $\mt{H}$-eigenvalue
of $\A$ that is smaller than or equal to $\mu_i+\dt$, i.e.,
\[
\upsilon_i = \max\{\mu \in H(\re,\A) : \, \mu \leq \mu_i + \dt \}.
\]
The next bigger $\mt{H}$-eigenvalue $\mu_{i+1}$ can be
determined by the following theorem.

\begin{theorem}  \label{decide:mui+1}
Let $\mathcal{A}\in\mathtt{T}^m(\re^n)$ and $\dt>0$.
Suppose $\mu_i \in H(\re, \A)$
and $\mu_{max}$ is the maximum $\mt{H}$-eigenvalue.
Let $\varpi(\lmd_i+\dt)$ be as in \reff{Hvpi:>mui+dt}.
\bit

\item [(i)] If $\upsilon_i = \mu_i$ and $\mu_{max} > \mu_i$, then
$\varpi(\lmd_i+\dt) = \mu_{i+1}$.

\item [(ii)] If $\upsilon_i = \mu_i$ and \reff{egH:mom:mui+1}
is infeasible for some $k$, then $\mu_i = \mu_{max}$.

\eit
\end{theorem}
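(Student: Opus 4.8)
The plan is to mirror the structure of Theorem~\ref{thm:cmp:lmdi+1}, since \reff{max:H:nui} and \reff{minf:Hmu:i+1} for $\mt{H}$-eigenvalues play exactly the roles that \reff{opt:chi:k} and \reff{itheigenpair} play for $\mt{Z}$-eigenvalues. The key structural facts I would invoke are: (a) by Proposition~\ref{number:Heig}, $H(\re, \A)$ is always finite, so every real $\mt{H}$-eigenvalue is automatically isolated — this removes the isolatedness hypothesis that was needed in the $\mt{Z}$-case; (b) $\upsilon_i = \max\{\mu \in H(\re,\A): \mu \le \mu_i+\dt\}$ is the optimal value of \reff{max:H:nui}, as already recorded just before the statement; and (c) $\varpi(\mu_i+\dt) = \min\{\mu \in H(\re,\A): \mu \ge \mu_i+\dt\}$ by \reff{Hvpi:>mui+dt}, together with the convergence result Theorem~\ref{Hcvg:mu:i+1}(i) relating feasibility of \reff{egH:mom:mui+1} to the emptiness of $H(\re,\A)\cap[\mu_i+\dt,+\infty)$.

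For part (i): First I would note that the hypothesis $\upsilon_i = \mu_i$ says that $\mu_i$ is the largest real $\mt{H}$-eigenvalue not exceeding $\mu_i+\dt$; equivalently, there is no real $\mt{H}$-eigenvalue in the half-open interval $(\mu_i, \mu_i+\dt]$. Combined with $\mu_{max} > \mu_i$, the set $H(\re,\A)\cap(\mu_i,+\infty)$ is nonempty, and since it is finite it has a minimum, which is precisely $\mu_{i+1}$, the next $\mt{H}$-eigenvalue after $\mu_i$; moreover $\mu_{i+1} > \mu_i+\dt$ because nothing lies in $(\mu_i,\mu_i+\dt]$. Now apply \reff{Hvpi:>mui+dt}: $\varpi(\mu_i+\dt)$ is the smallest $\mt{H}$-eigenvalue $\ge \mu_i+\dt$. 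Since the only $\mt{H}$-eigenvalues $\ge \mu_i+\dt$ are exactly those $\ge \mu_{i+1}$ (there being none strictly between), we conclude $\varpi(\mu_i+\dt) = \mu_{i+1}$. (Note: I would flag the apparent typo "$\varpi(\lmd_i+\dt)$" in the statement, which should read $\varpi(\mu_i+\dt)$.)

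For part (ii): If \reff{egH:mom:mui+1} is infeasible for some order $k$, then by Theorem~\ref{Hcvg:mu:i+1}(i) we have $H(\re, \A) \cap [\mu_i+\dt, +\infty) = \emptyset$, i.e.\ every real $\mt{H}$-eigenvalue is strictly smaller than $\mu_i+\dt$. Since $\upsilon_i = \mu_i$ means $\mu_i$ is the biggest $\mt{H}$-eigenvalue that is $\le \mu_i+\dt$, and now \emph{every} $\mt{H}$-eigenvalue is $\le \mu_i+\dt$ (indeed $< \mu_i+\dt$), it follows that $\mu_i$ is the biggest $\mt{H}$-eigenvalue of all, i.e.\ $\mu_i = \mu_{max}$.

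I do not anticipate a genuine obstacle here; the argument is essentially a bookkeeping exercise with intervals, and the two substantive inputs (finiteness from Proposition~\ref{number:Heig} and the feasibility characterization from Theorem~\ref{Hcvg:mu:i+1}(i)) are already available. The one point to be careful about is making the interval arithmetic airtight — in particular, being precise about whether endpoints are included, since $\upsilon_i$ and $\varpi$ are defined with the inequalities $\le \mu_i+\dt$ and $\ge \mu_i+\dt$ respectively, and the hypothesis $\upsilon_i=\mu_i$ must be used to rule out $\mt{H}$-eigenvalues in $(\mu_i,\mu_i+\dt]$, not merely in the open interval. Because $H(\re,\A)$ is finite there is no subtlety about suprema versus maxima, which is why no isolatedness assumption is needed, in contrast with Theorem~\ref{thm:cmp:lmdi+1}. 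I would present the proof in two short paragraphs, one per item, roughly two or three sentences each.
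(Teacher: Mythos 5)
Your proposal is correct and matches the paper's intent exactly: the paper's own proof consists of the single remark that it proceeds as in Theorem~\ref{thm:cmp:lmdi+1}, noting that $H(\re,\A)$ is finite (so $\mu_i$ is automatically isolated), and your write-up is precisely the spelled-out version of that argument, using the same two inputs (Proposition~\ref{number:Heig} for finiteness, and Theorem~\ref{Hcvg:mu:i+1}(i) for the feasibility characterization). Your observations about the half-open interval bookkeeping and the typo $\varpi(\lmd_i+\dt)$ versus $\varpi(\mu_i+\dt)$ are also both accurate.
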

\begin{proof}
The proof is same as for Theorem~\ref{thm:cmp:lmdi+1}.
Note that $\A$ has finitely many $\mt{H}$-eigenvalues,
and $\mu_i$ is always an isolated one.
\end{proof}

Since \reff{max:H:nui} is a polynomial optimization,
the optimal value $\upsilon_i$ can also be computed by solving Lasserre type
semidefinite relaxations that are similar to \reff{egH:mom:mui+1}-\reff{egH:sos:mui+1}.
For $\dt>0$ sufficiently small, we must have
$\upsilon_i = \mu_i$, no matter if $\mu_{i+1}$ exists or not.
This is because $\upsilon_i$ is the biggest $\mt{H}$-eigenvalue
that is less than or equal to $\mu_i+\dt$.

\subsection{An algorithm for all $\mathtt{H}$-eigenvalues}

We can compute all real $\mt{H}$-eigenvalues of a tensor $\A$ sequentially,
from the smallest one to the biggest one, if they exist.
A similar version of Algorithm~\ref{algzeig} can be applied.

\begin{algorithm} \label{alg:Heig}
Compute all real $\mathtt{H}$-eigenvalues of a tenosr $\A \in \mt{T}^m(\re^n)$.
\bit

\item [Step~0:] Choose a small positive value for $\dt$
(e.g., $0.05$).

\item [Step~1:] If (\ref{Heig:mom:mu1}) is infeasible for some order
$k$, then $\A$ has no real $\mathtt{H}$-eigenvalues and stop.
Otherwise, compute the smallest one $\mu_1$
by solving the hierarchy of (\ref{Heig:mom:mu1}). Let $i:=1$.

\item [Step~2:] Solve (\ref{max:H:nui}) for its optimal value $\upsilon_i$.
If $\upsilon_i=\mu_i$, go to Step~3; otherwise,
reduce $\dt$ (e.g., $\dt = \dt/5$) and compute $\upsilon_i$.
Repeat until we get $\upsilon_i = \mu_i$.

\item [Step~3:] If (\ref{egH:mom:mui+1}) is infeasible
for some order $k$, the largest $\mathtt{H}$-eigenvalue is
$\mu_i$ and stop. Otherwise, compute $\varpi(\mu_i+\dt)$
 by solving the hierarchy of (\ref{egH:mom:mui+1}). Let
$\mu_{i+1} :=\varpi(\mu_i+\dt)$, $i:=i+1$ and go to Step~2.

\eit
\end{algorithm}

\section{Numerical Examples}
\label{sc:num}

In this section, we give numerical examples
for how to compute real $\mathtt{Z}$-eigenvalues and $\mathtt{H}$-eigenvalues
for nonsymmetric tensors. The computation is implemented in MATLAB~7.10
in a Dell Linux Desktop with 8GB memory and Intel(R) CPU 2.8GHz.
The software Gloptipoly 3 \cite{HLL} is used to solve the semidefinite relaxations.
For computational results, only four decimal digits are displayed,
for cleanness of the presentation.

For odd ordered tensors, the $\mathtt{Z}$-eigenvalues always appear in $\pm $ pairs,
so only nonnegative $\mathtt{Z}$-eigenvalues are shown for them.
For Algorithm~\ref{algzeig} to compute all of them,
the real $\mathtt{Z}$-eigenvalues of
the tensor need to be all isolated. General nonsymmetric tensors
have finitely many $\mathtt{Z}$-eigenvalues, so they are all isolated.
In applications, for particular tensors,
Lemma~\ref{isolated} can be used to verify that all real $\mathtt{Z}$-eigenvalues
are isolated, once they are computed. In all our examples,
we used it to check the isolatedness.
The computations of all real eigenvalues in our examples
took from a few to a couple of seconds.

\begin{example}(\cite[Example~3]{LMV00})\label{example311}
Consider the tensor $\mathcal{A}\in\mathtt{T}^4(\mathbb{R}^2)$ with entries
$\mc{A}_{i_1i_2i_3i_4}=0$ except
\[
\mathcal{A}_{1111}=25.1,~~\mathcal{A}_{1212}=25.6,
~~\mathcal{A}_{2121}=24.8,~~\mathcal{A}_{2222}=23.
\]
Applying Algorithms~\ref{algzeig} and \ref{alg:Heig}, we get
all the real $\mathtt{Z}$-eigenvalues and $\mathtt{H}$-eigenvalues correctly.
The computed eigenvalues, as well as their eigenvectors,
are shown in Table~\ref{tabexample311}.
\begin{table}
\caption {$\mathtt{Z}$/$\mt{H}$-eigenpairs of the tensor in Example 5.1} \label{tabexample311}
\begin{center}
\begin{tabular}{|r|c|c|c|} \hline $i$ & $1$ & $2$ & $3$ \\\hline
$\mathtt{Z}$-eigenvalue $\lmd_i$ & $23.000$ & $25.1000$ &  \\\hline
$\mathtt{Z}$-eigenvectors &$\pm(0, 1)$ & $\pm(1, 0)$&\\\hline
$\mathtt{H}$-eigenvalue $\mu_i$ & $23.0000$ & $25.1000$ & $49.2687$ \\\hline
$\mathtt{H}$-eigenvector & $\pm(0,1)$ & $\pm (1,0)$ & $\pm(0.8527, \pm 0.8285)$\\\hline
\end{tabular}
\end{center}
\end{table}

\end{example}

\begin{example}(\cite[Example~1]{Qi11}) \label{exa2}
Consider the tensor $\mathcal{A}\in\mathtt{T}^3(\mathbb{R}^3)$ with
the entries $\mc{A}_{i_1i_2i_3}=0$ except {\small
\[
\begin{array}{lcccr}
\mathcal{A}_{111}=0.4333,&\mathcal{A}_{121}=0.4278,&\mathcal{A}_{131}=0.4140,&\mathcal{A}_{211}=0.8154,&\mathcal{A}_{221}=0.0199,\\
\mathcal{A}_{231}=0.5598,&\mathcal{A}_{311}=0.0643,&\mathcal{A}_{321}=0.3815,&\mathcal{A}_{331}=0.8834,&\mathcal{A}_{112}=0.4866,\\
\mathcal{A}_{122}=0.8087,&\mathcal{A}_{132}=0.2073,&\mathcal{A}_{212}=0.7641,&\mathcal{A}_{222}=0.9924,&\mathcal{A}_{232}=0.8752,\\
\mathcal{A}_{312}=0.6708,&\mathcal{A}_{322}=0.8296,&\mathcal{A}_{332}=0.125,&\mathcal{A}_{113}=0.3871,&\mathcal{A}_{123}=0.0769,\\
\mathcal{A}_{133}=0.3151,&\mathcal{A}_{213}=0.1355,&\mathcal{A}_{223}=0.7727,&\mathcal{A}_{233}=0.4089,&\mathcal{A}_{313}=0.9715,\\
\mathcal{A}_{323}=0.7726,&\mathcal{A}_{333}=0.5526.
\end{array}
\] \noindent}By
Algorithms~\ref{algzeig} and \ref{alg:Heig},
we get all its nonnegative $\mt{Z}$-eigenvalues
\[
0.2331, \quad 0.4869, \quad 2.7418,
\]
and all its real $\mt{H}$-eigenvalues
\[
1.3586, \quad 1.4985, \quad 1.5226, \quad 4.7303.
\]
\end{example}

\begin{example}(\cite[\S4.1]{IAD13})\label{exa3}
Consider the tensor $\mathcal{A}\in\mathtt{T}^3(\mathbb{R}^3)$
with $\mc{A}_{i_1i_2i_3}=0$ except {\smaller \smaller
\[
\begin{array}{lcccr}
\mathcal{A}_{111}=0.0072,&\mathcal{A}_{121}=-0.4413,&\mathcal{A}_{131}=0.1941,&\mathcal{A}_{211}=-0.4413,&\mathcal{A}_{221}=0.0940,\\
\mathcal{A}_{231}=0.5901,&\mathcal{A}_{311}=0.1941,&\mathcal{A}_{321}=-0.4099,&\mathcal{A}_{331}=-0.1012,&\mathcal{A}_{112}=-0.4413,\\
\mathcal{A}_{122}=0.0940,&\mathcal{A}_{132}=-0.4099,&\mathcal{A}_{212}=0.0940,&\mathcal{A}_{222}=0.2183,&\mathcal{A}_{232}=0.2950,\\
\mathcal{A}_{312}=0.5901,&\mathcal{A}_{322}=0.2950,&\mathcal{A}_{332}=0.2229,&\mathcal{A}_{113}=0.1941,&\mathcal{A}_{123}=0.5901,\\
\mathcal{A}_{133}=-0.1012,&\mathcal{A}_{213}=-0.4099,&\mathcal{A}_{223}=0.2950,&\mathcal{A}_{233}=0.2229,&\mathcal{A}_{313}=-0.1012,\\
\mathcal{A}_{323}=0.2229,&\mathcal{A}_{333}=-0.4891.
\end{array}
\] \noindent}By
Algorithms~\ref{algzeig} and \ref{alg:Heig},
we get all its nonnegative $\mathtt{Z}$-eigenvalues
\[
0.0000, \quad 0.5774
\]
and all its real $\mathtt{H}$-eigenvalues
\[
0.0000, \quad 0.7875.
\]
\end{example}

\begin{example}(\cite[Example~3.19]{NW14})\label{exa319}
Consider the tensor $\mathcal{A}\in\mathtt{T}^3(\mathbb{R}^n)$  such that
$$\mathcal{A}_{i_1i_2i_3}=tan\left(i_1-\frac{i_2}{2}+\frac{i_3}{3}\right).$$
Applying Algorithms~\ref{algzeig} and \ref{alg:Heig},
we get all the real $\mathtt{Z}$-eigenvalues and $\mathtt{H}$-eigenvalues.
They are reported in Table~\ref{tableofexa319}, for $n=2,3,4$.
There are no real $\mathtt{H}$-eigenvalues for the case $n=2$.

\begin{table}
\caption {$\mathtt{Z}$/$\mt{H}$-eigenvalues of the tensor in Example \ref{exa319}}
\label{tableofexa319}
\begin{center}
\begin{tabular}{|l|l|l|l|} \hline $n$ & $\mathtt{Z}$-eigenvalues $(\geq 0)$
& $\mathtt{H}$-eigenvalues
\\\hline $n=2$& $10.5518$   & none
\\\hline
$n=3$&  $0.2336$,\, $1.6614$,\,  $10.5063$  &  $-2.5615$, \, $0.3456$  \\  \hline
$n=4$&  $3.3651$,\, $8.8507$,\,  $10.4981$
& $-6.2888$, \, $-0.7048$,\,  $2.8947$,\, $5.9245$ \\  \hline
\end{tabular}
\end{center}
\end{table}
\end{example}

\begin{example}\label{atanexample}
Consider the tensor $\mathcal{A}\in \mathtt{T}^4(\mathbb{R}^3)$ such that
\[
\mathcal{A}_{i_1\cdots i_4}=arctan(i_1 i_2^2 i_3^3 i_4^4).
\]
By Algorithms~\ref{algzeig} and \ref{alg:Heig},
we get all its real $\mathtt{Z}$-eigenvalues
\[
-0.2700, \quad 0.0003, \quad 13.8286,
\]
and all its $\mathtt{H}$-eigenvalues
\[
-0.3662, \quad 0.0005, \quad 41.4705.
\]
\end{example}

\begin{example}\label{mulexample}
Consider the tensor $\mathcal{A}\in \mathtt{T}^4(\mathbb{R}^3)$ such that
\[
\mathcal{A}_{i_1\cdots i_4}=(1+ i_1 + 2i_2 + 3i_3 + 4i_4  )^{-1}.
\]
By Algorithms~\ref{algzeig} and \ref{alg:Heig}, we get all its
real $\mathtt{Z}$-eigenvalues
\[
0.0000, \quad 0.0002, \quad 0.4572,
\]
and all its real $\mathtt{H}$-eigenvalues
\[
0.0000, \quad 0.0005, \quad 1.3581.
\]
\end{example}

\begin{example}\label{expexample}
Consider the tensor $\mathcal{A}\in\mathtt{T}^5(\mathtt{R}^n)$ such that
\[
\mathcal{A}_{i_1,\cdots, i_5}=\Big(
\sum\limits_{j=1}^5 (-1)^{j-1} \exp(i_j) \Big)^{-1}.
\]
For $n=2,3,4,5$, all the real Z-eigenvalues and H-eigenvalues
are found by Algorithms~\ref{algzeig} and \ref{alg:Heig}.
They are shown in Table \ref{tabexpexample}.
\begin{table}
\caption {$\mathtt{Z}$/$\mathtt{H}$-eigenvalues of
the tensor in Example \ref{expexample}} \label{tabexpexample}
\begin{center}
\begin{tabular}{|l|l|l|l|} \hline
$n$ &$\mathtt{Z}$-eigenvalues$(\geq 0)$&$\mathtt{H}$-eigenvalues  \\ \hline
$2$ &  $0.4721$ & $0.5138,\,1.2654$ \\  \hline
$3$ & $0.6158$ & $0.5196,\,2.0800,\,2.2995,\, 2.4335$ \\ \hline
$4$ & $0.7682$ & $0.5199,\,2.0964,\,2.2980,\,2.3991,\,2.9454,\,4.4609,$ \\
&&  $4.9588,\,5.4419$ \\ \hline
$5$ & $0.8384$ & $0.5199,\,2.0978,\,2.2997,\,2.3860,\,2.4010,\,2.9658,$ \\
&& $\,4.4713,\,4.4902,\,4.6880,\,4.7008,\,5.0136,\,5.7891,$ \\
&& $6.0668,\,7.3250,\,7.3469,\,8.8555$ \\ \hline
\end{tabular}
\end{center}
\end{table}
\end{example}

\begin{example}\label{sqrexample}
Consider the tensor $\mathcal{A}\in \mathtt{T}^3(\mathbb{R}^n)$ such that
\[
\mathcal{A}_{i_1 i_2 i_3} = \frac{1}{10}
\left( i_1 + 2i_2 + 3i_3 - \sqrt{ i_1^2 + 2i_2^2 + 3i_3^2 }\right).
\]
For the values $n=2,3,4$,
we get all the real $\mathtt{Z}$-eigenvalues and $\mathtt{H}$-eigenvalues,
by Algorithms~\ref{algzeig} and \ref{alg:Heig}.
They are reported in Table~\ref{tabsqrexample}.
\begin{table}
\caption {$\mathtt{Z}$/$\mathtt{H}$-eigenvalues of the tensor in Example \ref{sqrexample}}
\label{tabsqrexample}
\begin{center}
\begin{tabular}{|l|l|l|} \hline
$n$ & $\mathtt{Z}$-eigenvalues$(\geq 0)$ &  $\mathtt{H}$-eigenvalues  \\  \hline
$2$ & $ 0.0024,\,  0.0038,\,  1.4928$ & $0.0060,\,   2.0960$  \\ \hline
$3$ & $ 0.0067,\,   0.0161,\,  3.6417$ &
$-0.0401,\, -0.0243,\, 0.0086,\, 0.0235,\, 0.1568,\,$\\&&
$ 0.6635,\, 1.4958,\,  6.2378$ \\ \hline
$4$ & $0.0000,\, 0.0107,\,  0.0396,\,  6.9922$ &
$-0.0240,\,\ -0.0087,\, 0.0000,\, 0.0102,\, 0.0258,\,$\\ &&$0.0437,\,  1.5761, \,\
2.6824,\,  4.1089,\, 5.8270,\, 13.7960$ \\  \hline
\end{tabular}
\end{center}
\end{table}

\end{example}

\noindent
{\bf Acknowledgement}
Jiawang Nie was partially supported by the NSF grants
DMS-0844775 and DMS-1417985. Xinzhen Zhang was partially supported by the National Natural Science Foundation of China (Grant No. 11471242, 11301303 and 11171180).

\end{document}